\documentclass[11pt,bezier]{article}
\usepackage{amsmath,graphicx,amssymb,amsfonts}
\usepackage{color}

\textwidth = 15 cm \textheight = 20 cm \oddsidemargin = 0 cm
\evensidemargin = 0 cm \topmargin = 0 cm
\parskip = 2.5 mm
\newtheorem{prethm}{{\bf Theorem}}

\newenvironment{thm}{\begin{prethm}{\hspace{-0.5
               em}{\bf.}}}{\end{prethm}}

\newtheorem{pretheorem}{{\bf Theorem}}

\newenvironment{theorem}{\begin{pretheorem}{\hspace{-0.5
               em}{\bf.}}}{\end{pretheorem}}

\newtheorem{prepro}{Proposition}
\newenvironment{pro}{\begin{prepro}{\hspace{-0.5
               em}{\bf.}}}{\end{prepro}}

\newtheorem{prepr}{{\bf Theorem}}

\newenvironment{pr}{\begin{prepr}{\hspace{-0.5
               em}{\bf.}}}{\end{prepr}}

\newtheorem{predefinition}{Definition}
\newenvironment{definition}{\begin{predefinition}{\hspace{-0.5
               em}{\bf.}}}{\end{predefinition}}

\newtheorem{prelem}{Lemma}
\newenvironment{lem}{\begin{prelem}{\hspace{-0.5
               em}{\bf.}}}{\end{prelem}}

\newtheorem{precor}{Corollary}
\newenvironment{cor}{\begin{precor}{\hspace{-0.5
               em}{\bf.}}}{\end{precor}}

\newtheorem{preexam}{Example}
\newenvironment{exam}{\begin{preexam}{\hspace{-0.5
               em}{\bf.}}}{\end{preexam}}

\newtheorem{preremark}{Remark}
\newenvironment{remark}{\begin{preremark}{\hspace{-0.5
               em}{\bf.}}}{\end{preremark}}

\newtheorem{preexample}{{\bf Example}}

\newenvironment{example}{\begin{preexample}\em{\hspace{-0.5
               em}{\bf.}}}{\end{preexample}}

\newtheorem{preproof}{{\bf Proof.}}

\newenvironment{proof}[1]{\begin{preproof}{\rm
               #1}\hfill{$\Box$}}{\end{preproof}}

\newcommand{\rk}{{\rm rank}\,}

\title{\bf\ Some Results and Connections of an Eigendecomposition Problem}
\author{{\normalsize { M. Mohammad-Noori${}^{ \textrm{a}}$}, { N. Ghareghani${}^{ \textrm{b, c}}$}, {  M. Ghandi${}^{ \textrm{d}}$}\,
}\vspace{2mm} \\{\footnotesize{$^{ \textrm{a}}$\it
School of
Mathematics, Statistics and Computer
Science, University of Tehran, College of Science}}\vspace{-2mm}\\{\footnotesize{\it P.O. Box 14155-6455, Tehran,
Iran}}\\
{\footnotesize{$^{ \textrm{b}}$\it Department of Engineering Science, College of Engineering, University of Tehran,}}\vspace{-2mm}\\
{\footnotesize{\em P.O. Box 11165-4563, Tehran, Iran }}\\
{\footnotesize{$^{ \textrm{c}}$\it School of Mathematics, Institute for Research in
Fundamental Sciences {\rm(IPM),}}}\vspace{-2mm}\\{\footnotesize{\it P.O.Box: 19395-5746, Tehran,
Iran}}\\
{\footnotesize{$^{ \textrm{d}}$\it Broad Institute of MIT and Harvard
7 Cambridge Center, 4034C, Cambridge, MA 02142, United States of America}}\vspace{-2mm}\\
\\{\footnotesize Emails: morteza@ipm.ir, mmnoori@ut.ac.ir},\vspace{-2mm}\\
 {\footnotesize  ghareghani@ipm.ir, ghareghani@ut.ac.ir},\vspace{-2mm}\\
  {\footnotesize  mghandi@broadinstitute.org}
 }

\date{}
\begin{document}
\maketitle


\begin{abstract}
We consider the problem of finding nonzero eigenvalues and the corresponding eigenvectors of a matrix $AA^{\top}$, where $A$ is a special incidence matrix; This matrix can equivalently be defined based on a match relation between some sequences. By using a concrete description of the obtained eigenvectors, we show that these are pairwise orthogonal and satisfy nice properties. The combinatorial arguments, in the sequel, lead us to obtain formulas for entries of matrices $W$ and $WA$, where $W$ is the Moore-Penrose pseudo-inverse of $A$. A special case of this problem has previously found applications in computational biology.

\end{abstract}
\section{Introduction}
The two questions, by which, this work is motivated, has been appeared as parts of some works in computational biology and can roughly be described as follows: For a given incidence matrix $A$, which is defined in a ``special way", find explicit formulas for entries of $W$ and $WA$, where $W$ is the Moore-Penrose pseudo-inverse of $A$.
The definition of $A$ presented in \cite{kmer-b}, is based on a match relation between some sequences as follows:
Consider the integers $0\leq k \leq \ell$ and $b\geq 2$, a given $b$-letter alphabet $\Sigma_b$ (usually taken  $\Sigma_b=\{0,\cdots,b-1\}$), and an additional gap
(or blank) symbol ``$g$". Two elements $x, y \in \Delta_b:=\Sigma_b \cup \{g\}$ are {\it matchable} if either one of them is $g$ or $x=y$; Two sequences of length $\ell$ on $\Delta_b$ are {\it matchable} if they are matchable in all their positions, accordingly. Here, when we talk about matchable sequences, we are interested in the situation where one of the sequences is non-gapped, say $x\in \Sigma_b^{\ell}$ and $y\in \Delta_b^{\ell}$: In this situation $x$ and $y$ are matchable if they have the same value in the non-gapped positions of $y$; A {\it match position} (resp. {\it mismatch position}) is a value of $i$ with $1\leq i\leq \ell$, satisfying $y_i\neq g$ and $y_i=x_i$ (resp. $y_i\neq g$ and $y_i\neq x_i$), thus when we count the number of match (resp. mismatch) positions, we exclude the gapped positions.
The matrix $A$ (or $A_{\ell,k;b}$) is defined as a $(0,1)$-matrix whose columns (resp. rows) are indexed by sequences of length $\ell$ on $\Delta_b^{\ell}$ with no gap symbol (resp. with exactly $\ell-k$ gap symbols) and with $A(u,v)=1$ if and only if $u$ and $v$ are matchable. Note that by the above description, the column indexes are exactly the elements of $\Sigma_b^{\ell}$. In computational biology, we have $b=4$, $\Sigma_4=\{\texttt{A,C,G,T}\}$ and the set of row indexes and column indexes have special names: The set of column indexes, $\Sigma_4^{\ell}$ is non-gapped oligomers of length $\ell$, briefly called non-gapped $\ell$-mers. The set of row indexes is gapped oligomers with $k$ non-gapped positions and length $\ell$, briefly called $k$-mers (of length $\ell$).

 The matrices $W$ and $H=WA$ were applied to develop methods for analysis of biological sequences including DNA and protein sequences in \cite{kmer-b} and \cite{Enhanced kmer-b}. A prerequisite for computing entries of $W$ in the method used in \cite{kmer-b}, was finding an {\it orthonormal nonzero eigendecomposition} for $A A^{\top}$, that is finding matrices $Q$ and $\Lambda$ such that the diagonal matrix $\Lambda$ contains all nonzero eigenvalues of  $A A^{\top}$ and the matrix $Q$ consists of orthonormal eigenvectors of $AA^{\top}$ accordingly. In \cite{kmer-b} we applied matrix $W$ to find robust estimates for $\ell$-mer counts and showed that this significantly improves our ability to predict the binding of certain transcription factors to DNA sequences. In \cite{Enhanced kmer-b}, we used matrix $H$ to develop a method to efficiently compute the $\ell$-mer count estimates and to compute a string kernel based on them.

The matrix $A$ can also be interpreted as the incidence matrix of an incidence structure: If we identify each gapped sequence with the set of non-gapped sequences which are matchable with it, the row indexes (resp. column indexes) of $A$ correspond to blocks (resp. points).
 Interestingly, matrix $A$ has been previously appeared in \cite{Delsarte} based on the ``orthogonal array" concept. More precisely, Delsarte in \cite{Delsarte} defined a partial ordering $\preceq$ on $\Delta_b^{\ell}$ and referred $(\Delta_b^{\ell}, \preceq)$ as the Hamming semilattice. By that definition, the set of all
 sequences of length $\ell$ with $k$ non-blank positions can be considered as a fiber in that Hamming semilattice (see also \cite{Terwilliger}). He also showed that the Hamming semilattice is regular \cite{Delsarte association}.
In addition, he obtained $AA^{\top}$ as a linear combination of some idempotent matrices, which construct a basis for a special  $(n+1)$-dimensional algebra generated by characteristic matrices of distance $i$, $0\leq i\leq n$,
and computed the corresponding coefficients \cite{Delsarte association}.


Here we consider the two questions mentioned at the beginning of this introduction, but we replace the definition of $A$ with a more general form as follows: Let $(b_1,\cdots,b_{\ell})$ be a given sequence of integers with $b_i\geq 2$ ($1\leq i\leq \ell$). Recall that $\Sigma_{b_i}=\{0,\cdots,b_i-1\}$ and $\Delta_{b_i}=\Sigma_{b_i}\cup \{g\}$ for $i=1,\cdots,\ell$. Let $\Sigma_B=\Sigma_{b_1} \times \cdots \times \Sigma_{b_\ell}$ and $\Delta_B=\Delta_{b_1}\times  \cdots \times  \Delta_{b_\ell}$.
 The definition of matchable sequences is the same as the one mentioned earlier. Now we define the matrix $A$ (or $A_{\ell,k;B}$) as a $(0,1)$ matrix whose columns are indexed by the elements of $\Sigma_B$ and whose rows are indexed by the elements of $\Delta_B$ with exactly $\ell-k$ gap symbols and with $A_{\ell,k;B}(u,v)=1$ if and only if $u$ and $v$ are matchable. It is obvious that if $b_1=\cdots=b_{\ell}=b$, this definition is reduced to the older one, that is with $B=(b,\cdots,b)$, we have $A_{\ell,k;B}=A_{\ell,k;b}$. The new problem is finding entries of matrices $W$ and $H$, where $W$ is the Moore-Penrose pseudo-inverse of $A_{\ell,k;B}$ and $H=WA$.

It is worthwhile to discuss shortly about the biological application of the above generalization here:  This generalization allows us to have a mixture of features that are defined over alphabets of different lengths. For example, in addition to the DNA sequence that is defined over the alphabet \{\texttt{A,C,G,T}\}, one can also add DNA methylation status which is defined over \{\textit{methylated}, \textit{unmethylated}\} alphabet or any other discrete features. Then a similar methodology as used in \cite{kmer-b} and \cite{Enhanced kmer-b} can be applied to find a robust estimate of the joint distribution of the features using limited training data.

 Apart from the application mentioned above, the new problem leads us to extend and modify the combinatorial and linear algebraic tools used in \cite{kmer-b} to find answers of new questions. This sheds light on the previous study; Meanwhile, prepares (modified) generalized versions of some of the identities in the middle steps of the deduction. Section \ref{Ov-o-n} gives a basis for comparison between the old and new work; Particularly, some similarities and dissimilarities are discussed in the last paragraph of that section.

The overview of the old and new works in Sections \ref{Op} and \ref{On} is a prelude to a formal introduction of notations and preliminaries in Section \ref{notation}: General notations and definitions for sets, strings and sequences are presented in Section \ref{notationSet}; The definitions and some properties of the elementary symmetric polynomials are mentioned in Section \ref{sym sec}; Some preliminaries from linear algebra are discussed in Section \ref{PreLinAlg}. The function $\nu_B$ and some of its properties is defined and studied in Section \ref{nu sec}; The main results of this section, that is the identities given in Propositions \ref{mu.mu.sum}, \ref{someNuId} and \ref{NuIdentity}, are used in later sections. Using the definition of function $\nu_B$ and also the elementary symmetric polynomials, we propose an orthonormal basis for the eigenspaces of the matrix $AA^{\top}$ in Section \ref{eigen sec}. Finally, in Section \ref{w sec} we compute the entries of the matrix $W$, the pseudo inverse of $A$, and the matrix $H=WA$ using the results of the previous section. This section ends with an example containing the computation of orthonormal nonzero eigendecompsition of $AA^{\top}$ and matrices $W$ and $H$ for given parameters. We recommend having a first look on this example now and coming back to it during the study of the paper, whenever needed.

  \section{Overview of the problem}\label{Ov-o-n}
  The balk of the paper \cite{kmer-b} consists of the techniques used to find the entries of $W$ and since a similar approach is adapted in this manuscript, a brief overview of this part of the old work is given in Section \ref{Op}. Before studying this section, it is recommended to have a look at the errata of \cite{kmer-b} on \cite{website}. The results of Theorems \ref{EigenDcmp} and \ref{TheoremWEntries} are mentioned in \cite{kmer-b}, while the Theorem \ref{TheoremHEntries} is an exception, in the sense that it is motivated by \cite{Enhanced kmer-b} but its results are not formally mentioned in \cite{Enhanced kmer-b,kmer-b}: This theorem is mentioned among the old results because it can be easily deduced form them. A brief overview of the new work is given in Section \ref{On}; The correspondence between the three theorems mentioned there and the ones in Section \ref{Op} is easily observed. This section ends with a paragraph containing a comparison of the old and the new work from a close point of view.

   \subsection{Overview and summary of the previous work}\label{Op}
The first step of finding the entries of the desired matrices is to find an explicit orthonormal nonzero eigendecomposition for $AA^{\top}$ (Theorem \ref{EigenDcmp}). The next step is to give formulas for entries of $W$, the Moore-Penrose pseudo-inverse of $A$ (Theorem \ref{TheoremWEntries}). Finally, the last theorem (Theorem \ref{TheoremHEntries}) gives formulas for entries of $WA$.

To find an orthonormal nonzero eigendecomposition for  $A_{\ell k} A_{\ell k}^{\top}$, a linear order $\prec$ is imposed on  $\Delta_b:=\Sigma_b\cup\{g\}$ such that
 $$ 0\prec 1 \prec \cdots \prec b-1 \prec g$$
Then a function $\nu'$ is defined primarily on $\Delta_b\times \Delta_b$ by
    \[
        \nu'(x,y)=\left\{
            \begin{matrix}1 & \text{if }\,\, x \prec y\,\, \text{or} \,\, x=y=g,\\
                0 & \text{if } x\succ y,\\
                -y & \text{if } x=y\neq g.
            \end{matrix}
            \right.
    \]
and extended to the product set $\Delta_b^{\ell}\times \Delta_b^{\ell}$ by
\begin{equation} \label{mult} \nu'(x_1 \cdots x_{\ell},y_1 \cdots y_{\ell})=\nu'(x_1,y_1)\cdots \nu'(x_\ell,y_\ell),\end{equation}
where $x_i,y_i \in \Delta_b$, (See Remark \ref{nunu'}).
The useful identities satisfied by $\nu'$, helped us to find an orthonormal nonzero eigendecomposition for
 $AA^{\top}$ as described below.

 \begin{pr} \label{EigenDcmp} Let $A=A_{\ell, k;b}$. The matrix $AA^{\top}$ admits an orthonormal nonzero eigendecomposition $AA^{\top}=Q\Lambda Q^{\top}$.
  By using combinatorial arguments, the entries of $\Lambda$ are given in terms of hypergeometric functions and the entries of $Q$ are explicitly obtained
  as expressions containing $\nu'$ and hypergeometric functions.
 \end{pr}
 \begin{proof}
 {See Proposition 5 of \cite{kmer-b} and its proof.}
\end{proof}
\begin{remark}\label{nunu'} In \cite{kmer-b}, the name $\nu$ was used for the above function $\nu'$.
In this paper the name $\nu$ is used for the function on $\Delta_b^{\ell} \times \Delta_b^{\ell}$ satisfying the same multiplicative rule of $\nu'$ (Replace $\nu'$ by $\nu$ in (\ref{mult})) but with the following primary definition which holds for any $x,y \in \Delta_b$

	\[
		\nu(x,y)=\left\{
			\begin{matrix}-b & \text{if }\,\,  x=y=g,\\
1 & \text{if }\,\, x \prec y,\\				
0 & \text{if } x \succ y,\\
				-y & \text{if } x=y\neq g.
			\end{matrix}
			\right.
	\]
\end{remark}
This modification is justified in  Remark \ref{rem-nu-nu'}. The Theorem \ref{EigenDcmp} remains true if we replace $\nu'$ by $\nu$.

Considering Theorem \ref{EigenDcmp}, the Moore-Penrose pseudo-inverse of $A$ is obtained by $W=A^{\top} Q \Lambda^{-1} Q^{\top}$ (See Lemma \ref{W_nzed_A} in Section \ref{PreLinAlg}). To calculate the entries of $W$, firstly observe that by symmetry arguments,
 the entry $W_{\ell,k}(u,v)$ only depends on the number of mismatches between the $\ell$-mer $u$ and the gapped $k$-mer  $v$; Consequently, there exists a finite sequence of only $k+1$ values $t_0,t_1,\cdots,t_k$ such that
    $W_{\ell k}(u,v)=t_m$ if $u$ and $v$ have exactly $m$ mismatches. The Theorem \ref{TheoremWEntries} gives the exact values of the sequence $t_i$, for $0\leq i\leq k$.

     \begin{pr} \label{TheoremWEntries} Let $u\in \Sigma_b^{\ell}$, $v\in \Delta_b^{\ell}$ and $|v|_g=\ell-k$.
    The entries of matrix $W_{\ell, k;b}(u,v)$ is given by each of the following formulas, in which
     $m$ is the number of mismatches between $u$ and $v$:
        \begin{align}
W_{\ell k}(u,v)&=\frac{1}{b^\ell }\sum_{n=0}^k \sum_{t=0}^n \frac{1}{{\ell -n \choose \ell -k}} (-1)^{n-t}{k-m \choose t}{m \choose n-t}(b-1)^t \label{Wentries1}\\
W_{\ell k}(u,v)&=\frac{{k-\ell \choose m}}{b^{\ell}\binom{\ell}{k} \binom{k}{m}} \sum_{n=0}^{k-m}\binom{\ell}{n}(b-1)^{n}\label{Wentries2}
        \end{align}
    \end{pr}
 \begin{proof}
 {The second formula is mentioned in Proposition 7 of \cite{kmer-b}, while the first one is implicit in its proof.}
\end{proof}

    Applying symmetry arguments on the entries of $H_{\ell,k}$ shows that the entry $H_{\ell,k}(u,w)$ depends only on the number of mismatches between $u\in U_{\ell}$ and $v\in U_{\ell}$ and as in formula (10) of
    \cite{Enhanced kmer-b} we can simply use $H_{\ell k}=W_{\ell k} A_{\ell k}$ to obtain the formula $H_{\ell k}(u,w)=\sum_{i=0}^m {\ell-m \choose k-i}{m \choose i}t_i$, where $m$ is the number of mismatches between $u$ and $w$ and $t_i$ is as defined earlier.

    \begin{pr} \label{TheoremHEntries} The matrix $H:=WA$ is a symmetric idempotent matrix, the sum of entries of any of whose rows (columns) equals 1. Moreover, let $u,w\in \Sigma_b^{\ell}$ and let $u$ and $w$ differ in $\ell-p$ positions. Then the entries of $H=WA$ are given by the following formula:
\begin{equation}
H_{\ell k}(u,w)= \frac{1}{b^{\ell}}{\displaystyle\sum_{n=0}^{k}\,(-1)^{k-n} {\ell-p-1 \choose k-n}{p\choose n}(b-1)^n}\label{Hentries1}\\
\end{equation}

   \end{pr}
 \begin{proof}
 {This formula can be proved using equation (\ref{Wentries1}), the definition of $H$ and binomial coefficients. (One can use equation (\ref{Wentries2}) instead of (\ref{Wentries1}), but the proof is longer.)}
\end{proof}

 \subsection{Overview of this work}\label{On}
The main results of this paper are the three theorems listed in this section. These are generalizations of the theorems of the previous section. The following paragraph contains an informal representation of some notations and definitions required to express the main results (These definitions are formally presented in Section \ref{notation}).

 For a given sequence $B=(b_1,\cdots,b_{\ell})$ of integers and a given integer $1\leq i\leq \ell$, and elements $x,y \in \Delta_{b_i}$ the value $\nu_i(x,y)$ is defined as follows
	\[
		\nu_i(x,y)=\left\{
			\begin{matrix}-b_i & \text{if }\,\,  x=y=g,\\
1 & \text{if }\,\, x \prec y,\\				
0 & \text{if } x\succ y,\\
				-y & \text{if } x=y\neq g.
			\end{matrix}
			\right.
	\]
For each pair of words $v_1\cdots v_{\ell}$ and $w_1\cdots w_{\ell}$, with $v_i,w_i\in \Delta_{b_i}$, the value $\nu_B(v,w)$ is defined by the product rule $\nu_B(v,w)=\prod_{i=1}^{\ell}\nu_i(v_i,w_i)$.

\begin{theorem}\label{eigenDecomp} Let $B=(b_1,\cdots,b_{\ell})$ and $A=A_{\ell,k;B}$. The matrix $AA^{\top}$ admits an orthonormal nonzero eigendecomposition $AA^{\top}=Q\Lambda Q^{\top}$.
  Using combinatorial arguments, the eigenvalues are given in terms of elementary symmetric polynomials of  subsequences of $b_i$'s and the entries of $Q$ are explicitly obtained as expressions containing
  the function $\nu_B$ and elementary symmetric functions.
 \end{theorem}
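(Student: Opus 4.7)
\begin{prooff}{}
The plan is to parallel the proof of Theorem~\ref{EigenDcmp} for the uniform alphabet, but with all position-dependent bookkeeping made explicit through the function $\nu_B$ of Remark~\ref{nunu'}. Concretely, I would propose a family of candidate eigenvectors indexed by a suitable set $Y\subseteq\Delta_B^{\ell}$, setting the $v$-th entry of the eigenvector $q_y$ (with $v$ ranging over the gapped $k$-mers in the row index set) equal to $c_y\,\nu_B(v,y)$ for a normalization constant $c_y$ fixed later. The set $Y$ should consist of those $y$ whose pattern of $g$'s is compatible with the rank constraint; informally, $y$ needs at least $\ell-k$ gap positions so that the corresponding eigenvector lies in the row-space of $A$.

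First I would verify the eigenvector property by direct computation. The entry $(AA^{\top})(v,v')$ counts non-gapped $\ell$-mers $u\in\Sigma_B$ matchable with both $v$ and $v'$, and this count factors across the $\ell$ coordinates as a product of per-position terms determined by the local pair $(v_i,v'_i)$. Since $\nu_B$ obeys exactly the same product rule, the sum $\sum_{v'}(AA^{\top})(v,v')\,\nu_B(v',y)$ decouples into a product of one-coordinate sums of the shape $\sum_{v'_i}(\text{local count})\,\nu_i(v'_i,y_i)$. Each such local sum should collapse, via the key identities of Propositions~\ref{mu.mu.sum}, \ref{someNuId} and \ref{NuIdentity}, to a scalar multiple of $\nu_i(v_i,y_i)$; taking the product over $i$ then yields a scalar multiple of $\nu_B(v,y)$, i.e.\ $q_y$ is an eigenvector. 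Tracking the per-coordinate scalars, and grouping the positions where $y_i=g$ separately from the rest, the eigenvalue comes out as a product that is naturally expressible as an elementary symmetric polynomial in the subsequence of $b_i$'s selected by the pattern of $y$.

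Next I would establish orthogonality and pin down the normalization. Because $\nu_B$ is multiplicative, the inner product $\langle q_y,q_{y'}\rangle = c_y\,c_{y'}\sum_v \nu_B(v,y)\,\nu_B(v,y')$ factors into a product of one-coordinate sums $\sum_{v_i}\nu_i(v_i,y_i)\,\nu_i(v_i,y'_i)$, where $v_i$ ranges over the values that can appear in the $k$-mer at position $i$. A short case analysis from the definition of $\nu_i$ shows that each local factor vanishes unless $y_i=y'_i$; hence $q_y\perp q_{y'}$ whenever $y\neq y'$. The same case analysis evaluates $\langle q_y,q_y\rangle$ as an explicit product, which determines $c_y$.

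Finally, a dimension count completes the argument: one checks that the chosen index set $Y$ has cardinality equal to $\mathrm{rank}(A)=\mathrm{rank}(AA^{\top})$, the number of nonzero eigenvalues counted with multiplicity, so the $q_y$'s form an orthonormal basis of the nonzero eigenspace and $Q\Lambda Q^{\top}$ recovers $AA^{\top}$. I expect the main technical obstacle to lie in the non-uniform bookkeeping in the eigenvector step: grouping the per-coordinate contributions so that the eigenvalue emerges cleanly as an elementary symmetric polynomial in the appropriate subsequence of $b_i$'s, rather than in spurious mixed products. Once the product structure of $\nu_B$ and the $\nu$-identities of Section~\ref{nu sec} are in hand, the remaining work is largely symbolic manipulation generalizing the arguments of \cite{kmer-b}.
\end{prooff}
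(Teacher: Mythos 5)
Your overall architecture matches the paper's: candidate eigenvectors with entries $\nu_B(w,y)$ indexed by gapped words, orthogonality from vanishing one-coordinate $\nu$-sums, eigenvalues of the form $S_{\ell-k}$ of a subsequence of $B$, completeness by a counting argument. But two points need repair. First, a recurring technical gloss: sums over $V_{\ell,k;B}$ do \emph{not} factor coordinatewise, because the row index set is constrained to have exactly $\ell-k$ gaps and so is not a product set. The paper first conditions on the gap pattern $G_w$ (equation (\ref{sumnunux}) in the proof of Proposition \ref{mu.mu.sum}) and factorizes only within each class $X_{\ell k}(B,G)$; summing $\prod_{i\in G}b_i$ over $G\in\binom{G_{v'}}{\ell-k}$ is precisely where $S_{\ell-k}(B(G_{v'}))$ comes from. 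You anticipate this for the eigenvalue step, but your orthogonality paragraph asserts a literal product factorization that is false as stated (the conclusion survives, since the local vanishing $\sum_{w_i}\nu_i(w_i,y_i)\,\nu_i(w_i,y'_i)=0$ for $y_i\neq y'_i$ is the right mechanism, exactly as in Case 2 of the paper's proof). Relatedly, your ``suitable set $Y$'' must be pinned down as $V'_{\ell,\le k;B}$, i.e.\ words over $\Gamma_{b_i}=\Delta_{b_i}\setminus\{0\}$ with at least $\ell-k$ gaps: if a label $y$ has $y_i=0$ at some position then $\nu_i(\cdot,0)\equiv 0$ and the vector is identically zero, and only after excluding $0$ does the count come out to $R_k(B)=\sum_{j\le k}S_j(B-1)$. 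Note also that the paper never expands $(AA^{\top})(v,v')$: it proves the two intertwining identities $A^{\top}x_{v'}=S_{\ell-k}(B(G_{v'}))\,z_{v'}$ and $A z_{v'}=x_{v'}$ (Propositions \ref{someNuId} and \ref{matIdThm}), which give the eigenvector property in one line and yield the $A^{\top}A$ decomposition for free; your direct route can be made to work but is heavier.

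The genuine gap is the completeness step. You propose to finish by checking $|Y|=\mathrm{rank}(A)=\mathrm{rank}(AA^{\top})$, but nothing in your plan supplies $\mathrm{rank}(A)$ independently; in the paper, $\mathrm{rank}(A)=R_k(B)$ is a \emph{corollary} of the eigendecomposition, not an input, so as written your argument is circular. The paper instead closes with a trace argument (Proposition \ref{egenvalueAAT,all}): $AA^{\top}$ is positive semidefinite, so all eigenvalues are nonnegative; the pairwise-orthogonal, hence linearly independent, eigenvectors already found carry total eigenvalue mass $\sum_{v'\in V'_{\ell,\le k}}S_{\ell-k}(B(G_{v'}))$, which Lemma \ref{s_i x 2} evaluates to $\binom{\ell}{k}\prod_{i=1}^{\ell}b_i=\mathrm{trace}(AA^{\top})$, leaving no room for any further nonzero eigenvalue. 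You should replace the dimension count by this trace computation (or else supply an independent proof that $\mathrm{rank}(A)=R_k(B)$, which is harder than the lemma).
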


From the above theorem, it is concluded that the Moore-Penrose pseudo-inverse of $A$ is $W:=A^{\top} Q \Lambda^{-1} Q^{\top}$. The following theorem gives the entries of this matrix.

\begin{theorem}\label{W-entries}
Let $u\in \Sigma_B$ and $v\in \Delta_B$ and $v$ contains exactly $\ell-k$ gapped positions. Let $G_v$ denotes the gapped positions of $v$ and $\overline{G}_v=\{1,\cdots,\ell\}\setminus G_v$. Let $P$ (resp. $Q$) be the set of nongapped positions of $v$ with $v_i=u_i$ (resp. with $v_i \neq u_i$). For a given set $G=\{a_1,\cdots,a_n\}\subseteq \{1,\cdots,\ell \}$ with $a_1<\cdots<a_n$, the sequence $B(G)$ is defined as $(b_{a_1},\cdots,b_{a_n})$. Then the entry $W_{\ell,k;B}(u,v)$ is given as below
\begin{equation*}
\label{w-ent-f}
W_{\ell k}(u,v)=\frac{1}{\displaystyle \prod_{i \in \overline{G}_v} b_i}\,\, \sum_{G}\frac{\displaystyle (-1)^{|Q\setminus G|}\prod_{i \in P\setminus G}(b_i-1)}{S_{\ell-k}(B(G))},
\end{equation*}
where the summation runs over all sets $G$ with $G_v \subseteq G \subseteq \{1,\cdots,\ell\}$.
\end{theorem}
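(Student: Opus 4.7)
The plan is to start from the representation $W = A^{\top} Q \Lambda^{-1} Q^{\top}$ provided by Lemma \ref{W_nzed_A} together with the eigendecomposition of Theorem \ref{eigenDecomp}, and to unfold this product entry by entry. Writing
\[ W_{\ell k}(u,v) \;=\; \sum_{e} \lambda_e^{-1}\, Q(v,e) \sum_{w} A(w,u)\, Q(w,e), \]
where $e$ ranges over the labels of the nonzero eigenvectors and $w$ over gapped sequences with exactly $\ell-k$ gaps, reduces the problem to evaluating the inner sum over $w$ and then carrying out the outer sum over $e$.

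Because each eigenvector in Theorem \ref{eigenDecomp} is parametrized by a ``support'' set of positions together with a non-gap character assignment on those positions, and because both $Q(w,e)$ and $Q(v,e)$ factor multiplicatively over coordinates via $\nu_B$, the whole computation decomposes position by position. The inner sum $\sum_w A(w,u) Q(w,e)$, whose summand is a product of one-variable expressions in $w_i$ subject to the matchability constraint $A(w,u)=1$, collapses to a product over $i\in\{1,\dots,\ell\}$ of local one-position sums. The local identities needed to evaluate these are exactly those packaged in Propositions \ref{mu.mu.sum}, \ref{someNuId} and \ref{NuIdentity}; each local sum reduces to a clean constant depending only on which of three types the position has (gapped in $v$, matched with $v_i=u_i$, or mismatched with $v_i\neq u_i$) and on whether the position lies in the support of $e$.

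After the position-wise simplifications, each term contributing to $W_{\ell k}(u,v)$ is a product indexed by position type and by support membership. Collecting terms, the sum over the character assignments of $e$ produces a factor $(b_i-1)$ at every match position outside the support, a factor $-1$ at every mismatch position outside the support, and, after pulling out the normalizing constants coming from the orthonormalization of $Q$, the global prefactor $1/\prod_{i\in\overline{G}_v} b_i$. The eigenvalue $\lambda_e$ of Theorem \ref{eigenDecomp} is an elementary symmetric polynomial of degree $\ell-k$ in a canonical subsequence of $B$; identifying this subsequence with $B(G)$ for a set $G\supseteq G_v$ attached to $e$ turns the double sum over supports and characters into the single sum over $G_v\subseteq G\subseteq\{1,\dots,\ell\}$ of the statement, with denominator $S_{\ell-k}(B(G))$.

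The main obstacle is the careful bookkeeping: one must check, position by position, that the three local $\nu_B$-identities yield precisely the factors $(b_i-1)$, $-1$, and the denominators $b_i$ demanded by the target formula, and then verify that the re-indexing $e \mapsto G$ matches the elementary symmetric polynomial appearing in $\lambda_e$ to $S_{\ell-k}(B(G))$. Once the local identities and this change of summation variable are in place, the remainder is routine algebraic simplification; no further combinatorics is required.
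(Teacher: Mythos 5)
Your proposal is correct and is essentially the paper's own argument: the paper likewise expands $W=A^{\top}Q\Lambda^{-1}Q^{\top}$ entrywise (via $W=A^{\top}\Upsilon D\Upsilon^{\top}$), evaluates the inner sum over $y\in M_{\ell k}(u)$ by Proposition \ref{someNuId}(i), inserts the norms $\|x_{v'}\|^2$ from Proposition \ref{mu.mu.sum}(ii), and then groups the eigenvector labels $v'$ by their gap-set $G=G_{v'}$ so that Proposition \ref{NuIdentity}(ii) carries out the sum over character assignments, with one factor $S_{\ell-k}(B(G))$ cancelling to leave the stated denominator. The bookkeeping you defer (including the vanishing of terms with $G\not\supseteq G_v$, which follows from $\nu_B(v,v')=0$ there) is exactly the routine computation the paper performs.
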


\begin{theorem}\label{H-entries} The matrix $H_{\ell,k;B}=W_{\ell,k;B}A_{\ell,k;B}$ is a symmetric idempotent matrix, the sum of entries of any of whose rows (columns) equals 1. Let $u,w \in \Sigma_B$. Let $P$ (resp. $Q$) denote the set of positions $i$ in which $u_i=w_i$ (resp. $u_i\neq w_i$). Then the entry $H_{\ell,k;B}(u,w)$ is given as below
\begin{equation*}
\label{g-ent-f}
H_{\ell k}(u,w)=\frac{1}{\displaystyle \prod_{i=1}^{\ell} b_i}\,\, \sum_{G}{\displaystyle (-1)^{|Q\setminus G|}\prod_{i \in P\setminus G}(b_i-1)},
\end{equation*}
where the summation runs over all sets  $G \subseteq \{1,\cdots,\ell\}$ with $|G|\geq \ell-k$.
\end{theorem}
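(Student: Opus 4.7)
The plan is to prove the three assertions in turn, leaning on Theorem~\ref{W-entries} and the Moore--Penrose identity $W=A^{\top}Q\Lambda^{-1}Q^{\top}$ that follows from Theorem~\ref{eigenDecomp}.

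First I would dispose of symmetry and idempotence. Since $W$ is the Moore--Penrose pseudo-inverse of $A$, it is standard that $H=WA$ equals the orthogonal projection onto the row space of $A$; in particular $H^{\top}=H$ and $H^{2}=H$. Alternatively, a direct one-line check shows $WA\,WA=A^{\top}Q\Lambda^{-1}Q^{\top}(AA^{\top})Q\Lambda^{-1}Q^{\top}A=A^{\top}Q\Lambda^{-1}Q^{\top}A=WA$ using $Q^{\top}(AA^{\top})Q=\Lambda$, and symmetry is obvious from the same expression.

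For the row/column sums, it suffices to show that the all-ones vector $\mathbf{1}$ lies in the row space of $A$, since then $H\mathbf{1}=\mathbf{1}$ (and by symmetry $\mathbf{1}^{\top}H=\mathbf{1}^{\top}$). Summing all rows of $A$: for each column index $u\in\Sigma_B$, the number of gapped words $v$ with exactly $\ell-k$ gaps that are matchable with $u$ is precisely $\binom{\ell}{\ell-k}$, obtained by choosing which positions of $u$ to replace by $g$. Hence $\mathbf{1}^{\top}A=\binom{\ell}{k}\mathbf{1}^{\top}$, placing $\mathbf{1}$ in the row space of $A$, which yields the row-sum claim.

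The main content is the formula. The plan is to expand $H_{\ell k}(u,w)=\sum_{v}W_{\ell k}(u,v)A_{\ell k}(v,w)$ where $v$ ranges over gapped words with $\ell-k$ gaps. The constraint $A_{\ell k}(v,w)=1$ forces $v$ to be obtained from $w$ by choosing a set $T\subseteq\{1,\dots,\ell\}$ of $\ell-k$ positions and replacing $w$ on $T$ by $g$. With $P$ and $Q$ the match/mismatch sets defined from $u,w$ as in the statement, the corresponding data for $v$ becomes $G_{v}=T$, $P_{v}=P\setminus T$, $Q_{v}=Q\setminus T$. Substituting these into the formula of Theorem~\ref{W-entries} and observing that for $G\supseteq T$ one has $(P\setminus T)\setminus G=P\setminus G$ and $(Q\setminus T)\setminus G=Q\setminus G$, gives
\begin{equation*}
H_{\ell k}(u,w)=\sum_{\substack{T\subseteq\{1,\dots,\ell\}\\|T|=\ell-k}}\frac{1}{\prod_{i\notin T}b_{i}}\sum_{G\supseteq T}\frac{(-1)^{|Q\setminus G|}\prod_{i\in P\setminus G}(b_{i}-1)}{S_{\ell-k}(B(G))}.
\end{equation*}

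The key step is swapping the order of summation, grouping by $G$ and summing over $T\subseteq G$ with $|T|=\ell-k$. Writing $1/\prod_{i\notin T}b_{i}=\prod_{i\in T}b_{i}/\prod_{i=1}^{\ell}b_{i}$, the inner sum becomes $\frac{1}{\prod_{i=1}^{\ell}b_{i}}\sum_{T\subseteq G,\,|T|=\ell-k}\prod_{i\in T}b_{i}=\frac{S_{\ell-k}(B(G))}{\prod_{i=1}^{\ell}b_{i}}$, which cancels the factor $S_{\ell-k}(B(G))$ in the denominator and leaves exactly the claimed formula, with $G$ ranging over subsets of $\{1,\dots,\ell\}$ of size at least $\ell-k$. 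The one place where care is needed is checking the set-theoretic simplifications after the swap and the identification of the inner sum with the elementary symmetric polynomial; the rest is routine bookkeeping.
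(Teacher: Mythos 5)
Your proof is correct, and for the main content --- the entry formula --- it is essentially the paper's own argument: expand $H(u,w)=\sum_{v\sim w}W(u,v)$, note that $G_v\subseteq G$ forces $P(u,v)\setminus G=P(u,w)\setminus G$ and $Q(u,v)\setminus G=Q(u,w)\setminus G$, move the factor $\prod_{i\in G_v}b_i$ into the numerator, interchange the two summations, and recognize the inner sum $\sum_{T\subseteq G,\,|T|=\ell-k}\prod_{i\in T}b_i=S_{\ell-k}(B(G))$, which cancels the denominator. Where you genuinely diverge is in the auxiliary claims. For idempotence and symmetry the paper points back to the computation inside Lemma \ref{W_nzed_A}, while your one-line verification via $Q^{\top}(AA^{\top})Q=\Lambda$ (using $Q^{\top}Q=I$) is an equivalent rephrasing. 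For the row and column sums the paper works inside its eigenvector machinery: it observes that the all-ones vector $j$ equals $z^{\ell k 0}=z_{g^{\ell}}$, so $Aj=x^{\ell k 0}$ is (up to normalization) the first column of $Q$, and then computes $A^{\top}Q\Lambda^{-1}Q^{\top}Aj=j$ explicitly using $Q^{\top}x^{\ell k 0}=\|x^{\ell k 0}\|e_1$ and Proposition \ref{matIdThm}. You instead invoke the standard fact that $A^{+}A$ is the orthogonal projector onto ${\rm row}(A)$ and check that $\mathbf{1}$ lies in the row space because $\mathbf{1}^{\top}A_{\ell,k;B}=\binom{\ell}{k}\mathbf{1}^{\top}$ (each $u$ is matchable with exactly $\binom{\ell}{k}$ gapped words). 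Both routes are sound; yours is shorter and rests only on general Moore--Penrose theory, whereas the paper's stays self-contained within the $x_{v'},z_{v'}$ formalism it has already built and does not need the projector characterization, which it never states. Your bookkeeping at the summation swap (including $1/\prod_{i\notin T}b_i=\prod_{i\in T}b_i/\prod_{i=1}^{\ell}b_i$) checks out, so there is no gap.
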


This paragraph is dedicated to a comparison between \cite{kmer-b} and the present work.
Usually the objects which have the same roles (in the old and new work) are given similar names (The matrices
$\Upsilon_{\ell k}$,  $C_{\ell,k}$, $H_{\ell k}$ in the new work are among the exceptions).
 There is an approximate correspondence between the sections as follows:
 The Sections 3.1, 3.2, 3.3, 3.4, 4 and 5 of \cite{kmer-b} correspond to Sections \ref{PreLinAlg}, \ref{sym sec}, \ref{notationSet}, \ref{nu sec}, \ref{eigen sec} and \ref{w sec} of the present work. Meanwhile, the reader should also note to differences, for instance the modification in the definition of the function $\nu$ causes several other slight modifications in the definitions and the results. This key helps the reader to focus on dissimilarities between Section 3.4 of \cite{kmer-b} and Section \ref{nu sec} of this work. Replacing the old recursive proof of orthogonality of the columns of $\Delta_{\ell,k}$ with a new direct proof of the same fact for $\Upsilon_{\ell, k}$ is also among the differences.

\section{Notations and Preliminaries}\label{notation}
This section contains three subsections: In \ref{notationSet} we fix some notations for sets, strings and sequences  that we use in this work. In \ref{sym sec} we study some properties of elementary symmetric polynomials, and in \ref{PreLinAlg} we give some preliminaries and notations from linear algebra.

\subsection{Notations for sets strings and sequences}\label{notationSet}
\begin{definition} Let $\ell$ be a positive integer. The set $[\ell]$ is defined as $[\ell]=\{1,\cdots,\ell\}$. For a set $X$ and a nonnegative integer $n$, by ${X \choose n}$, we mean the set of all $n$-element subsets of $X$. Thus $|{X \choose n}|={|X| \choose n}$ and $|{[\ell] \choose n}|={\ell \choose n}$.
\end{definition}

\begin{definition}
 A word $x$ on the alphabet $\Sigma$, is a sequence $x=x_1\cdots x_{\ell}$ whose elements $x_i$ belong to the finite set $\Sigma$.
As in \cite{kmer-b} that for a given integer $b\geq 2$, the sets $\Sigma_b$, $\Delta_b$, $\Gamma_b$ are defined as follows
	\begin{align*}
        \Sigma_b &= \{0,1,\cdots,b-1\}\\
		\Delta_b &= \Sigma_b \cup \{g\}\\
		\Gamma_b &= \Delta_b \setminus \{0\},
	\end{align*}
where $g$ stands for the gap symbol.
\end{definition}

\begin{definition}
Let $B=(b_1,b_2,\cdots,b_{\ell})$ be an $\ell$-tuple of integers $b_i\geq 2$. Define the sets $\Sigma_B$, $\Delta_B$, $\Gamma_B$, $U_{\ell; B}$ and $V_{\ell,k;B}$ as follows
\begin{align*}
        \Sigma_B &= \Sigma_{b_1}\times \cdots \times \Sigma_{b_{\ell}}\\
        \Delta_B &= \Delta_{b_1}\times \cdots \times \Delta_{b_{\ell}}\\
        \Gamma_B &= \Gamma_{b_1}\times \cdots \times \Gamma_{b_{\ell}}\\
        U_{\ell;B} &= \Sigma_B\\
        V_{\ell,k;B} &= \{v\in \Delta_B : |v|_g=\ell-k\}\\
        V'_{\ell,k;B}&=\{w\in \Gamma_b^\ell: |w|_g=\ell-k\}\\
        V_{\ell, \leq k;B}&=\bigcup_{m=0}^k V_{\ell m}\\
        V'_{\ell, \leq k;B}&=\bigcup_{m=0}^k V'_{\ell m}
\end{align*}
\end{definition}
In the following definition we study some special orderings defined on the sets we introduced before. As usual, if $\prec$ is a given strong order on a set, the corresponding weak order is defined as $\preceq\, = \,(\prec \cup =)$ and if
$\preceq$ is given as a weak order on a set, the corresponding strong order, $\prec$ is defined by $\prec \, =\, (\preceq \cap \neq)$.

\begin{definition}
For a given integer $b\geq 2$, we order the set $\Delta_b$ by the following linear order
$$0\prec 1\prec \cdots \prec b-1 \prec g\, .$$
For any positive integer $\ell$, this order induces a partial order $ \prec'$ on $\Delta_b^{\ell}$ ;More precisely, the corresponding weak order, $\preceq'$, is given as below
$$ \preceq' \, = \, \preceq \times \cdots \times \preceq\, .$$
on the product set $\Delta_b^{\ell}$. Here we slightly extend this definition to a more general situation and define a partial order on the set $\Delta_B$. For any $1\leq i\leq \ell$ we consider the order $\prec_i$ on the set $\Delta_{b_i}$ defined by
$$0\prec_i 1\prec_i \cdots \prec_i b_i-1 \prec_i g\, $$
and we let
\begin{equation}
\preceq_B\, =\, \preceq_1\times \cdots \times \preceq_\ell
\end{equation}
It is clear that the relation $\prec_B$ is a partial order; Moreover, if $\ell>1$, then it is not a total order.
\end{definition}

\begin{exam}
Let $B=(2,3)$, $\ell=2$ and $k=1$. Then we have
\begin{align*}
        \Delta_B &= \{00,01,02,0g,10,11,12,1g,g0,g1,g2,gg\}\\
        \Gamma_B &= \{11,12,1g,g1,g2,gg\}\\
        U_{2;B} &= \{00,01,02,10,11,12\}\\
        V_{2,1;B} &= \{0g,1g,g0,g1,g2\}\\
        V'_{2,1;B}&=\{1g,g1,g2\}\\
        V_{2, \leq 1;B}&=\{0g,1g,g0,g1,g2,gg\}\\
        V'_{2, \leq 1;B}&=\{1g,g1,g2,gg\}
\end{align*}

\end{exam}

\begin{remark} As it is clear from the definitions of $U_{\ell;B}$ and  $V_{\ell,k;B}$, when we use these notations we specially emphasize on parameters $\ell$ and $k$.
\end{remark}

 \begin{definition} For any word $v \in \Delta_B$ we set $G_v=\{i: 1\leq i \leq \ell, v_i=g\}$ and $\overline{G}_v = \{1,\cdots,\ell\}\setminus G_v$. If $X=\{x_1,\cdots,x_n\}$ is a subset of $\{1,\cdots,\ell\}$ with $x_1<x_2<\cdots<x_n$ then by $B(X)$ we mean $(b_{x_1},\cdots,b_{x_n})$. Especially, if $v\in \Delta_B$ and  $v'\in \Gamma_B$, then
 $B(G_{v})=(b_i)_{i \in G_{v}}$ and $B(G_{v'})=(b_i)_{i \in G_{v'}}$.
 \end{definition}

 \begin{exam}\label{Bgv} Let $B=(b_1,\cdots,b_7)$, $b_1=2, b_2=b_3=b_4=3$ and $b_5=b_6=b_7=4$. Let $v'=1g2gg12$. Then $G_{v'}=\{2,4,5\}$, $B(G_{v'})=(b_2,b_4,b_5)=(2,3,4)$
\end{exam}

 \begin{definition}
    We say elements $u\in \Sigma_B$ and $v\in \Delta_{B}$ match (or $u$ and $v$ are matchable) if for any $1\leq i\leq \ell$ with $v_i\neq g$ we have $u_i=v_i$; We denote this by $v\sim u$. The set of the elements $v\in V_{\ell, k;B}$ which are matchable with $u\in U_{\ell;B}$, is denoted by $M_{\ell, k}(u)$.
    The set of elements $u\in U_{\ell;B}$ which are matchable with $v$, is denoted by $M'_{\ell, k;B}(v)$.
\end{definition}

\begin{definition} The matrix $A_{\ell, k;B}$ is defined as a $(0,1)$ matrix whose rows and columns indexed respectively by the elements of $\Delta_B$ and $\Sigma_B$ and $A_{\ell, k;B}(w,v)=1$ if and only if $w$ is matchable with $v$ in non-gapped positions.
\end{definition}

\subsection{Elementary symmetric polynomials and some identities}\label{sym sec}

Elementary symmetric polynomials are well-studied objects in the study of polynomials ring $k[x_1,x_2,\cdots,x_n]$ (see Chapter $7$ of \cite{cox}). The fundamental theorem of symmetric polynomials state that every symmetric
polynomial in  $k[x_1,x_2,\cdots,x_n]$ can be written uniquely as a polynomial in terms
elementary symmetric functions (A well-known example of this fact from the college algebra, is the Waring formula stating the expression $x_1^k+x_2^k$ in terms of $s=x_1+x_2$ and $p=x_1 x_2$). In this section after a formal definition we introduce some of the identities satisfied by them.

\begin{definition} Let $i$ and $n$ be nonnegative integers and let $X=(x_1,x_2,\cdots,x_n)$ be a finite sequence of  variables. The $i$-th elementary symmetric polynomial, denoted as $S_i(X)$, is defined as $S_i(X):=\sum_{I\in {X \choose i}} \prod_{i\in I} x_i$.
\end{definition}

 \begin{exam}\label{SBgv} Considering Example \ref{Bgv}, we have
\begin{align*}
S_0(B(G_{v'}))&=1\\
S_1(B(G_{v'}))&=b_2+b_4+b_5=9\\
S_2(B(G_{v'}))&=b_2 b_4+b_2 b_5+b_4 b_5=26\\
S_3(B(G_{v'}))&=b_2 b_4 b_5=24\\
S_4(B(G_{v'}))&=0
\end{align*}
\end{exam}

 Note that if $x_i=x$, \,$i=1,\cdots,n$, then $S_i(X)={n \choose i}x^i$; thus the mentioned identities can be considered as generalizations of binomial identities. Particularly, part (i) of the following lemma gives a generalization of Pascal's identity, where part (ii) generalizes the binomial theorem.

\noindent {\bf{Notation.}} Let $X=(x_1,\ldots,x_n)$ be a finite sequence of numbers and $\alpha$ and $\beta$ be arbitrary numbers. Then we show the sequence $(\beta x_1+\alpha,\ldots,\beta x_n+\alpha)$ by $\beta X+\alpha$; Particularly $X+\alpha=(x_1+\alpha,\cdots,x_n+\alpha)$ and $\beta X=(\beta x_1,\cdots,\beta x_n)$. For a given integer $1\leq m\leq n$, by $X_{\widehat{m}}$ we mean the sequence obtained by deleting the $m$-th position of $X$, that is
$X_{\widehat{m}}=(x_1,\cdots,x_{m-1},x_{m+1},\cdots,x_n)$.
The following lemma is easy to prove.

 \begin{lem} \label{S_iIdnt} Let $X=(x_1,\ldots,x_n)$ be a finite sequence of variables.
    \begin{itemize}
        \item[\rm (i)] We have $S_0(X)=1$ and $S_{n}(X)=\prod_{i=1}^n x_i$. Moreover, for any integer $i$
 and any integer $1\leq m\leq n$ we have
 \begin{equation}\label{Pascal-s}
 S_i(X)=x_m S_{i-1}(X_{\widehat{m}})+S_i(X_{\widehat{m}})
  \end{equation}
        \item[\rm (ii)] We have
\begin{align}\label{bino}
\sum_{i=0}^{n} S_i(X)\beta^i \alpha^{k-i}&=\prod_{i=1}^n (\beta x_i+\alpha)\\
\sum_{i=0}^n S_i(X)&=\prod_{i=0}^n (x_i+1)\\
\end{align}
Consequently, for any subset $M\subseteq [n]$ we have
\begin{align}
\sum_{I\subseteq M}\prod_{i\in I}x_i \beta^{|I|}\alpha^{|M|-|I|}&=\prod_{i\in M}(\beta x_i+\alpha)\\
\sum_{I\subseteq M}\prod_{i\in I}x_i&=\prod_{i\in M}(x_i+1)\label{prodx+1}
\end{align}
    \end{itemize}
 \end{lem}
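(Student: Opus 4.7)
The plan is to prove everything directly from the definition $S_i(X)=\sum_{I\in\binom{[n]}{i}}\prod_{j\in I}x_j$, with the only non-trivial ingredient being a classification of subsets according to whether they contain a distinguished index.

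For part (i), $S_0(X)=1$ is immediate because the only $0$-subset is $\emptyset$ and the empty product equals $1$, while $S_n(X)=\prod_{i=1}^n x_i$ follows because the only $n$-subset of $[n]$ is $[n]$ itself. For the recurrence, I would fix $m$ and split the $i$-subsets $I\subseteq[n]$ into those containing $m$ and those not containing $m$. Subsets $I$ with $m\in I$ are in bijection with $(i-1)$-subsets $J$ of $[n]\setminus\{m\}$ via $J=I\setminus\{m\}$, and each contributes $x_m\prod_{j\in J}x_j$ to $S_i(X)$, summing to $x_m\,S_{i-1}(X_{\widehat m})$; subsets $I$ with $m\notin I$ are exactly the $i$-subsets of $[n]\setminus\{m\}$, summing to $S_i(X_{\widehat m})$. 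Adding these gives (\ref{Pascal-s}).

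For part (ii), the first identity is a generating-function expansion: expanding $\prod_{i=1}^n(\beta x_i+\alpha)$ by choosing, from each factor, either the summand $\beta x_i$ or the summand $\alpha$ gives a sum over subsets $I\subseteq[n]$ (the set of indices where $\beta x_i$ was chosen), and the resulting contribution is $\beta^{|I|}\alpha^{n-|I|}\prod_{i\in I}x_i$. Grouping by $|I|=i$ and using the definition of $S_i(X)$ yields the claimed identity (with the exponent on $\alpha$ being $n-i$). The second identity is the specialization $\alpha=\beta=1$. The two ``consequently'' formulas follow by applying these same identities to the subsequence $X_M:=(x_i)_{i\in M}$ in place of $X$ and then relabelling subsets of $M$ as subsets of $[|M|]$; alternatively, one can repeat the expansion argument directly for $\prod_{i\in M}(\beta x_i+\alpha)$.

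There is no real obstacle here, since the entire lemma is a bookkeeping exercise on subsets of $[n]$; the only thing to watch is the apparent typo (the exponent $k-i$ should be $n-i$, and the product on the second line should run from $i=1$), which the expansion argument naturally clarifies. The Pascal-like recurrence in (i) is the only piece whose proof is slightly more than a direct unwinding of definitions, and even there the bijective split on the index $m$ makes the verification mechanical.
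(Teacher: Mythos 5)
Your proof is correct, and in fact the paper gives no proof of this lemma at all---it is simply asserted to be ``easy to prove''---so your subset-splitting argument for the Pascal-type recurrence (classifying $i$-subsets by whether they contain $m$) together with the term-by-term expansion of $\prod_{i=1}^n(\beta x_i+\alpha)$ is exactly the routine argument the authors intend, with the specializations and the restriction to a subsequence $(x_i)_{i\in M}$ handled correctly. You are also right to flag the two typos in the statement: the exponent on $\alpha$ in the first display of (ii) should be $n-i$ rather than $k-i$, and the product in the second display should run from $i=1$ to $n$, not from $i=0$.
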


\begin{definition} Let $i$ and $n$ be nonnegative integers and let $X=(x_1,x_2,\cdots,x_n)$ be a finite sequence of  variables. The expression $R_i(X)$ is then defined as follows:
\begin{equation} \label{Ri-I}
R_i(X)=\sum_{j=0}^i S_j(X-1)
\end{equation}
\end{definition}
 The following lemma which gives a binomial type recurrence relation for the entries $R_i$, is easily proved by the definition of $R_i$ and Lemma \ref{S_iIdnt} (i) .

 \begin{lem} \label{R_iIdnt} Let $X=(x_1,\ldots,x_n)$ be a finite sequence of variables.
   We have $R_0(X)=1$ and for any integer $i\geq n$, $R_i(X)=\prod_{i=1}^n x_i$. Moreover, for any integer $i$
 and any $1\leq m\leq n$ we have
 \begin{equation}
 R_i(X)=(x_m-1)R_{i-1}(X_{\widehat{m}})+R_i(X_{\widehat{m}})
  \end{equation}
  \end{lem}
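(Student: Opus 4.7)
The plan is to deduce all three assertions directly from the definition $R_i(X)=\sum_{j=0}^{i}S_j(X-1)$ and the identities already in Lemma \ref{S_iIdnt}. The base case $R_0(X)=1$ is immediate, since the sum collapses to the single term $S_0(X-1)$, which equals $1$ by Lemma \ref{S_iIdnt}(i). For the stabilization at $i\ge n$, I would observe that $S_j(X-1)=0$ whenever $j>n$ (the defining sum over $j$-subsets of an $n$-element sequence is empty), so the defining sum of $R_i(X)$ truncates to $\sum_{j=0}^{n} S_j(X-1)$; applying part (ii) of Lemma \ref{S_iIdnt} with $\alpha=\beta=1$ to the sequence $X-1$ then gives $\prod_{m=1}^{n}\bigl((x_m-1)+1\bigr)=\prod_{m=1}^{n}x_m$, as claimed.

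For the recurrence, the key move is to apply the Pascal-type identity (\ref{Pascal-s}) of Lemma \ref{S_iIdnt}(i) to each term $S_j(X-1)$, using the fact that $(X-1)_{\widehat{m}}=X_{\widehat{m}}-1$. This produces
\[
S_j(X-1)=(x_m-1)\,S_{j-1}(X_{\widehat{m}}-1)+S_j(X_{\widehat{m}}-1).
\]
Summing over $0\le j\le i$ splits $R_i(X)$ into two pieces: the second piece is exactly $\sum_{j=0}^{i}S_j(X_{\widehat{m}}-1)=R_i(X_{\widehat{m}})$, while the first, after re-indexing and using the convention $S_{-1}=0$, becomes $(x_m-1)\sum_{j=0}^{i-1}S_j(X_{\widehat{m}}-1)=(x_m-1)R_{i-1}(X_{\widehat{m}})$. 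Adding these reproduces the stated recurrence.

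There is no genuine obstacle here; the only care needed is the index bookkeeping for the shift in the $S_{j-1}$ term, together with the observation that the argument throughout is $X-1$ rather than $X$, so that Lemma \ref{S_iIdnt} is being applied on the shifted sequence. If anything, the small point worth highlighting is the compatibility $(X-1)_{\widehat{m}}=X_{\widehat{m}}-1$, which is what allows the recurrence to be stated cleanly in terms of $X_{\widehat{m}}$ (rather than in terms of a separately shifted sequence) and which mirrors the role of Lemma \ref{S_iIdnt}(i) for $S_i$ itself.
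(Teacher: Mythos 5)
Your proof is correct and is essentially the paper's own argument carried out in full: the paper gives no detailed proof, stating only that the lemma ``is easily proved by the definition of $R_i$ and Lemma \ref{S_iIdnt} (i)'', and your term-by-term application of the Pascal-type identity (\ref{Pascal-s}) to $S_j(X-1)$, together with the compatibility $(X-1)_{\widehat{m}}=X_{\widehat{m}}-1$ and the reindexing with $S_{-1}=0$, is precisely that intended argument. Your handling of the stabilization $R_i(X)=\prod_{i=1}^{n}x_i$ for $i\geq n$ via Lemma \ref{S_iIdnt}(ii) applied to $X-1$ with $\alpha=\beta=1$ is likewise the natural filling-in of the claimed easy verification.
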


\begin{exam}
Let $0\leq k\leq \ell$ be integers and $B=(b_1,\cdots,b_{\ell})$, $u\in U_{\ell;B}$ and $v\in V_{\ell k;B}$.
Then
\begin{align*}
        &|U_{\ell;B}|= \prod_{i=1}^{\ell} b_i, \,\,\,  &|V_{\ell,k;B}|=S_k(B), \,\,\,          &|V'_{\ell,k;B}|=S_k(B-1), \,\,\,&|V_{\ell,\leq k;B}|= R_k(B+1)\\
        &|V'_{\ell,\leq k;B}|=R_k(B), \,\,\,  &|M_{\ell,k}(u)|={\ell \choose k}, \,\,\,
        &|M'_{\ell,k}(v)|=\prod_{i\in G_v} b_i  \,&
\end{align*}
Moreover, If we denote the hamming distance of two elements $u,w\in U_{\ell;B}$ by $Ham(u,w)$, then
we have
\begin{align*}
        |\{w\in U_{\ell;B}: Ham(w,u)=d| &= S_d(B-1)\\
        |\{w\in U_{\ell;B}: Ham(w,u)\leq d| &= R_d(B)
\end{align*}

\end{exam}
The following lemma contains an identity which is useful for future applications.
\begin{lem}\label{s_i x 2}
Let $B=(b_1,\ldots,b_{\ell})$. Then the following identity holds:
\begin{equation}\label{s_i x 2 eq}
\sum_{\overline{G}\subseteq [\ell], \,\,\,|\overline{G}|\leq k} \prod_{i\in \overline{G}}(b_i-1)\, S_{\ell-k}(B(G))
={{\ell}\choose{k}}\prod_{i=1}^{\ell}b_i
\end{equation}
\end{lem}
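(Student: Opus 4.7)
The plan is to rewrite the left-hand side as a double sum indexed by pairs of disjoint subsets, and then apply identity (\ref{prodx+1}) to collapse one of the two sums. By definition,
\[
S_{\ell-k}(B(G)) = \sum_{\substack{I\subseteq G \\ |I|=\ell-k}} \prod_{i\in I} b_i,
\]
so the LHS of (\ref{s_i x 2 eq}) equals
\[
\sum_{\substack{\overline{G}\subseteq[\ell]\\ |\overline{G}|\le k}} \ \sum_{\substack{I\subseteq [\ell]\setminus\overline{G}\\ |I|=\ell-k}} \prod_{i\in \overline{G}}(b_i-1) \prod_{i\in I} b_i.
\]
Here $I$ and $\overline{G}$ are disjoint subsets of $[\ell]$ with $|I|=\ell-k$ and $|\overline{G}|\le k$; the condition $|\overline{G}|\le k$ is automatic once we fix $I$ since $\overline{G}\subseteq [\ell]\setminus I$ has size at most $k$.

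Next I would swap the order of summation, fixing $I$ first and letting $\overline{G}$ range over all subsets of $[\ell]\setminus I$:
\[
\text{LHS} = \sum_{\substack{I\subseteq[\ell]\\ |I|=\ell-k}} \Bigl(\prod_{i\in I} b_i\Bigr) \sum_{\overline{G}\subseteq [\ell]\setminus I} \prod_{i\in \overline{G}}(b_i-1).
\]
Applying identity (\ref{prodx+1}) with $M=[\ell]\setminus I$ and variables $x_i=b_i-1$, the inner sum evaluates to $\prod_{i\in [\ell]\setminus I}((b_i-1)+1)=\prod_{i\in [\ell]\setminus I} b_i$.

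Substituting this back gives
\[
\text{LHS} = \sum_{\substack{I\subseteq[\ell]\\ |I|=\ell-k}} \prod_{i\in I} b_i \cdot \prod_{i\in [\ell]\setminus I} b_i = \binom{\ell}{\ell-k}\prod_{i=1}^{\ell} b_i = \binom{\ell}{k}\prod_{i=1}^{\ell} b_i,
\]
as required. There is no real obstacle here: the only subtle point is recognizing that the natural reindexing turns the left-hand side into a sum over ordered pairs $(\overline{G},I)$ of disjoint subsets, after which identity (\ref{prodx+1}) from Lemma \ref{S_iIdnt} does all the work.
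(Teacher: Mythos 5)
Your proof is correct and takes essentially the same route as the paper's own: both expand $S_{\ell-k}(B(G))$ as a sum over $(\ell-k)$-subsets (your $I$ is the paper's $N$), interchange the order of summation noting the constraint $|\overline{G}|\leq k$ becomes automatic, and collapse the inner sum over $\overline{G}$ via identity (\ref{prodx+1}) to obtain $\prod_{i\in[\ell]\setminus I}b_i$. The only difference is notational, as the paper parametrizes the outer sum by the complement $\overline{N}$ rather than by $I$ directly.
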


\begin{proof}
{
\begin{align*}
\sum_{\overline{G}\subseteq [\ell], \,\,\,|\overline{G}|\leq k}S_{\ell-k}(B(G)) \,\,\prod_{i\in \overline{G}}(b_i-1)
&=\sum_{\overline{G}\subseteq [\ell], \,\,\,|\overline{G}|\leq k}\,\,\,\left( \sum_{N\subseteq G,\,\, |N|=\ell-k}\,\, \prod_{i\in N} b_i \right)\,\,\prod_{i\in \overline{G}}(b_i-1)\\
&=\sum_{\overline{G}\subseteq [\ell], \,\,\,|\overline{G}|\leq k}\,\,\,\sum_{\overline{N}\supseteq \overline{G},\,\, |\overline{N}|=k}\,\, \prod_{i\in N} b_i \,\,\prod_{i\in \overline{G}}(b_i-1)\\
&=\sum_{\overline{N}\subseteq [\ell], \,\,|\overline{N}|=k}\,\,\,\,\sum_{\overline{G}\subseteq \overline{N}}\prod_{i\in N} b_i\,\,\prod_{i\in \overline{G}}(b_i-1)\,\, \\
&=\sum_{\overline{N}\subseteq [\ell], \,\,|\overline{N}|=k}\,\,\,\prod_{i\in N} b_i \,\sum_{\overline{G}\subseteq \overline{N}}\,\,\prod_{i\in \overline{G}}(b_i-1)\\
&=\sum_{\overline{N}\subseteq [\ell], \,\,|\overline{N}|=k}\,\,\,\prod_{i\in N} b_i \,\prod_{i\in \overline{N}} b_i~~~\hbox{(by (\ref{prodx+1}))}~~~ \\
&=\sum_{\overline{N}\subseteq [\ell], \,\,|\overline{N}|=k}\,\,\,\prod_{i=1}^{\ell} b_i \\
&=\prod_{i=1}^{\ell} b_i \sum_{\overline{N}\subseteq [\ell], \,\,|\overline{N}|=k}1 \\
&={{\ell}\choose{k}}\prod_{i=1}^{\ell}b_i
\end{align*}
}
\end{proof}

\subsection{Notations and preliminaries from Linear Algebra}\label{PreLinAlg}
    Let $F$ be a field and $A\in F^{m\times n}$ be a matrix. The row space of $A$ is denoted as ${\rm row}(A)$, the column space of $A$ is denoted as ${\rm col}(A)$, and the dimension of the row space (which is the same as the dimension of the column space)
    of $A$ is denoted as ${\rm rank}(A)$.
    The kernel of $A$, denoted as ${\ker}(A)$, is the space of all column vectors $x$ satisfying $Ax={\bf 0}$ and the dimension of this space is called the nullity of $A$ and denoted as ${\rm null} (A)$.
    It is known that ${\rm null}(A)+{\rm rank}(A)=n$.
    Let $B\in F^{n\times n}$. The characteristic polynomial of $B$ is defined as $p_B(z)=\det (zI-X)$.
    An element $\lambda \in F$ is an eigenvalue of $B$ if there exists a nonzero column vector $x$ satisfying $Bx=\lambda x$; The vector $x$ is called an eigenvector of $B$.
    It is observed that $\lambda$ is an eigenvalue of $B$ if and only if it is a root of the characteristic equation $p_B(z)$.
    For an eigenvalue $\lambda$, the space $\ker(B-\lambda I)$ is called the eigenspace of $B$ corresponding to $\lambda$.
    The {\it algebraic multiplicity} of an eigenvalue $\lambda$, denoted as $\alpha(\lambda)$ is the multiplicity of the root $\lambda$ of $p_B(z)$.
    The geometric multiplicity of an eigenvalue $\lambda$, denoted by $\gamma(\lambda)$, is the dimension of ${\ker}(B-\lambda I)$. It is well known that for any matrix $B$ and any eigenvalue $\lambda$ of $B$ we have
    $\gamma(\lambda)\leq \alpha(\lambda)$.
    The matrix $B$ is called diagonalizable if there exists a nonsingular matrix $P$
     such that $B=P\Lambda_0 P^{-1}$ for some diagonal matrix $\Lambda_0$.
     It is easily seen that for a diagonalizable matrix $B$ we have $\gamma(\lambda)\leq \alpha(\lambda)$. Indeed, all eigenvalues of $B$ appear in the main diagonal of  $\Lambda_0$ and the columns of $P$ are the corresponding  eigenvectors.


    If eigenvectors belonging to distinct eigenvalues of the matrix $B$ are mutually orthogonal, then there exists an
     eigendecomposition $B=P\Lambda_0 P^{-1}$ such that the columns of $P$ are mutually orthogonal normal (orthonormal) vectors. It is easy to see that if the columns of $P$ are orthonormal, then $P^{-1}=P^{\top}$.
       Hence, if the columns of $P$ are orthonormal, then $B=P \Lambda_0 P^{\top}$. We may assume that the nonzero eigenvalues appears before the zeros on the main diagonal of $\Lambda_0$; Consequently, we obtain the block decomposition $P=[Q\,N]$ where the columns of $N$ are in ${\ker} (B)$. This gives the eigendecomposition $B=Q \Lambda Q^{\top}$, where the matrix $Q$  is obtained by deleting the columns of $P$ which are in ${\ker} (B)$, and
 $\Lambda$ is obtained by deleting the zero columns and zero rows of $\Lambda_0$, in this paper such a matrix decomposition is called {\it orthonormal nonzero eigendecomposition}.

    For a matrix $A\in \mathbb {C}^{m\times n}$, its Hermitian adjoint, $A^*$, is its conjugate transpose, i.e. $A^*$ is an $n \times m$ matrix with $A^*(i,j)=\overline{A(j,i)}$.
    A matrix $A$ is {\it Hermitian} if $A=A^*$, thus a real matrix $A$ is Hermitian if and only if it is symmetric.
     It is well known that all eigenvalues of a Hermitian matrix $A$ with dimension $n$ are real, and that $A$ has $n$ linearly independent eigenvectors, that is for every eigenvalue $\lambda$ of $A$ we have $\alpha(\lambda) = \gamma(\lambda)$. Moreover, Hermitian matrix has orthogonal eigenvectors for distinct eigenvalues. Hence, every
     Hermitian matrix $A$ has an orthonormal nonzero eigendecomposition of the form $A=Q \Lambda Q^{\top}$.

     A Hermitian matrix $A$ is {\it positive definite} (resp. {\it positive semidefinite}) if ${\rm Rel}(x^*Ax) > 0$ (resp. ${\rm Rel}(x^*Ax) \geq 0$) for all nonzero $x\in \mathbb{C}^n$. It is concluded that
       areal symmetric matrix $A$ of order $n$ is positive definite (resp. positive semi-definite) if $x^{\top} Ax>0$ (resp. $x^{\top} Ax\geq 0$) for all nonzero $x\in \mathbb{R}^n$. For any matrix $A$, the matrix $A^{\top}A$ is positive semidefinite, and ${\rm rank}(A) = {\rm rank}(A A^{\top})$. Conversely, any positive semidefinite matrix $M$ can be written as $M = A^{\top}A$; this is the Cholesky decomposition. A Hermitian (or symmetric) matrix is positive definite (resp. positive semi-definite) if and only if all its eigenvalues are positive (resp. nonnegative). The following lemma is concluded from the facts that mentioned above.

       \begin{lem}\label{positivesemidef}
       Let $B$ be a positive semi-definite matrix. Then $B$ admits an orthonormal nonzero eigendecomposition of the form $B=Q \Lambda Q^{\top}$. Where
       \begin{equation}\label{qqtI}
       Q^{\top}Q=I.
       \end{equation}
       Moreover let $B=AA^{\top}$, then we have
       \begin{equation}\label{AtQQT}
       A^{\top}QQ^{\top}=A^{\top}.
       \end{equation}
       \end{lem}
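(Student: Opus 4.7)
The plan is to obtain both statements as direct corollaries of the spectral-theoretic facts collected in Section \ref{PreLinAlg} together with the standard identity $\mathrm{col}(AA^{\top})=\mathrm{col}(A)$. First I would invoke the spectral theorem: since $B$ is positive semi-definite it is (real) symmetric, hence Hermitian, so by the discussion preceding the lemma it admits an orthonormal eigendecomposition $B=P\Lambda_0P^{\top}$ with $P^{\top}P=PP^{\top}=I$ and with $\Lambda_0$ diagonal whose entries are the (nonnegative) eigenvalues of $B$. After reordering the diagonal of $\Lambda_0$ so that the nonzero eigenvalues come first, I would write $P=[\,Q\,\,N\,]$ in the block form introduced in Section \ref{PreLinAlg}, where the columns of $Q$ correspond to the nonzero eigenvalues and those of $N$ lie in $\ker(B)$. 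Multiplying out the block form gives $B=Q\Lambda Q^{\top}$; the identity $Q^{\top}Q=I$ in (\ref{qqtI}) is immediate because the columns of $Q$ are a subset of the orthonormal columns of $P$.

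For the second statement I would use two standard linear-algebra facts mentioned in Section \ref{PreLinAlg}: (a) $\mathrm{rank}(A)=\mathrm{rank}(AA^{\top})$, which together with the inclusion $\mathrm{col}(AA^{\top})\subseteq\mathrm{col}(A)$ forces $\mathrm{col}(A)=\mathrm{col}(AA^{\top})=\mathrm{col}(B)$; and (b) for a symmetric matrix $B$, the column space $\mathrm{col}(B)$ coincides with the span of the eigenvectors belonging to nonzero eigenvalues, i.e.\ with $\mathrm{col}(Q)$. Combining (a) and (b) yields $\mathrm{col}(Q)=\mathrm{col}(A)$.

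Since $Q$ has orthonormal columns, the matrix $QQ^{\top}$ is the orthogonal projector onto $\mathrm{col}(Q)=\mathrm{col}(A)$. Therefore each column of $A$ is fixed by this projector, which means $QQ^{\top}A=A$; taking transposes gives exactly (\ref{AtQQT}). I do not expect a genuine obstacle here: the only points requiring attention are (i) verifying cleanly that dropping the zero-eigenvalue columns still produces a valid factorization $B=Q\Lambda Q^{\top}$, which is a one-line block computation, and (ii) being explicit that the span of nonzero eigenvectors equals $\mathrm{col}(B)$, which follows because the orthogonal complement of that span is $\ker(B)=\mathrm{col}(B)^{\perp}$ by symmetry of $B$.
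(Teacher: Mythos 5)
Your proposal is correct, and the first half (spectral theorem, reordering, block split $P=[\,Q\,\,N\,]$, and (\ref{qqtI}) from orthonormality of the columns of $P$) is exactly the paper's argument. For (\ref{AtQQT}), however, you take a genuinely different route. The paper stays inside the block decomposition: it proves $A^{\top}N=0$ directly by the one-line norm computation ($y\in\ker(AA^{\top})$ implies $y^{\top}AA^{\top}y=\|A^{\top}y\|^2=0$, hence $A^{\top}y=0$), and then multiplies the identity $QQ^{\top}+NN^{\top}=I$ (from $PP^{\top}=I$) on the left by $A^{\top}$. You instead argue at the level of subspaces: $\mathrm{rank}(A)=\mathrm{rank}(AA^{\top})$ plus ${\rm col}(AA^{\top})\subseteq{\rm col}(A)$ gives ${\rm col}(A)={\rm col}(B)={\rm col}(Q)$, and $QQ^{\top}$, being the orthogonal projector onto ${\rm col}(Q)$, fixes every column of $A$. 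Both arguments are sound, and they are in fact dual formulations of the same fact, since $\ker(AA^{\top})=\ker(A^{\top})$ and ${\rm col}(AA^{\top})={\rm col}(A)$ are orthogonal-complement versions of one another; note that the rank identity you import as a black box (it is quoted in Section \ref{PreLinAlg}) is itself standardly proved by precisely the norm computation the paper uses. What your version buys is modularity and conceptual transparency — the projector picture makes (\ref{AtQQT}) visibly a statement that $QQ^{\top}$ acts as the identity on ${\rm col}(A)$ — while the paper's version is more self-contained and avoids having to verify separately that dropping the zero-eigenvalue columns identifies ${\rm col}(Q)$ with ${\rm col}(B)$, since the complementary block $NN^{\top}$ is killed outright by $A^{\top}N=0$.
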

       \begin{proof}{
        Using the previous notation, let $AA^{\top}=P\Lambda_0 P^{\top}$ be an orthonormal decomposition for $AA^{\top}$ and $P=[Q\, N]$ where the columns of $N$ are in ${\ker} (AA^{\top})$. The equation (\ref{qqtI}) is concluded from the orthonormality of the columns of $Q$. To prove (\ref{AtQQT}), we claim that
        \begin{equation}\label{ATN0}
       A^{\top}N=0.
       \end{equation}
       To prove this, note that if $y\in {\ker}(A_{\ell k}A_{\ell k}^{\top})$, then from $A_{\ell k}A_{\ell k}^{\top}y=0$ we obtain $y^{\top}A_{\ell k}A_{\ell k}^{\top}y=0$ and $||A_{\ell k}^{\top}y||=0$, thus $A_{\ell k}^{\top}y=0$, which shows (\ref{ATN0}). Now, from $PP^{\top}=I$ we obtain $QQ^{\top}+NN^{\top}=I$; Multiplying from left by $A^{\top}$ and using (\ref{ATN0}), we provide (\ref{AtQQT}), as required.
       }
       \end{proof}



    The {\it Moore-Penrose pseudoinverse} of a matrix $A$, denoted by $A^{+}$, is defined as a matrix that satisfies all the following four conditions:
     \begin{align*}
        AA^{+}A&=A \\
        A^{+}AA^{+}&=A^{+} \\
        (AA^{+})^{^*}&=AA^{+} \\
        (A^{+}A)^{^*}&=A^{+}A.
    \end{align*}

    The Moore-Penrose pseudoinverse exists and is unique for any given matrix $A$. Also we have $ A^{+}= (A^* A)^{+} A^*= A^*(AA^* )^{+}$. For further properties of the Moore-Penrose pseudoiverse see for instance \cite{handbooklin}. The following lemma gives the Moore-Penrose pseudoinverse of a matrix $A$ using an orthonormal nonzero eigendecomposition of $AA^{\top}$.

\begin{lem}\label{W_nzed_A}
Let $AA^{\top}=Q \Lambda Q^{\top}$ be an orthonormal nonzero eigendecomposition of $AA^{\top}$ and let $W=A^{\top}Q\Lambda^{-1}Q^{\top}$. Then $W$ is the Moore-Penrose pseudoinverse of $A$.
\end{lem}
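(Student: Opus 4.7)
The plan is to verify directly the four defining Moore--Penrose conditions for $W = A^{\top} Q \Lambda^{-1} Q^{\top}$, namely
$AWA = A$, $WAW = W$, $(AW)^{\top} = AW$, and $(WA)^{\top} = WA$.
The only ingredients I expect to need are (a) the decomposition $AA^{\top} = Q\Lambda Q^{\top}$, (b) orthonormality of the columns of $Q$, which gives $Q^{\top}Q = I$ by (\ref{qqtI}), and (c) the identity $A^{\top} Q Q^{\top} = A^{\top}$ from (\ref{AtQQT}), whose transpose reads $Q Q^{\top} A = A$. All four verifications should reduce to short manipulations of these.

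First I would compute $AW$. Substituting the definition of $W$,
\[
AW \;=\; (A A^{\top})\,Q\,\Lambda^{-1} Q^{\top} \;=\; Q\Lambda Q^{\top} Q\,\Lambda^{-1} Q^{\top} \;=\; Q Q^{\top},
\]
using $Q^{\top}Q = I$. Since $Q Q^{\top}$ is manifestly symmetric, the third Moore--Penrose condition is immediate, and from $AW = Q Q^{\top}$ together with $Q Q^{\top} A = A$ we obtain
\[
AWA \;=\; QQ^{\top} A \;=\; A,
\]
giving the first condition.

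Next I would check the symmetry of $WA$. Because $\Lambda^{-1}$ is diagonal and hence symmetric,
\[
(WA)^{\top} \;=\; A^{\top}\bigl(A^{\top} Q \Lambda^{-1} Q^{\top}\bigr)^{\top} \;=\; A^{\top} Q \Lambda^{-1} Q^{\top} A \;=\; WA,
\]
which settles the fourth condition. Finally, for $WAW = W$ I would compute
\[
WAW \;=\; A^{\top} Q \Lambda^{-1} (Q^{\top} A A^{\top} Q)\,\Lambda^{-1} Q^{\top},
\]
and evaluate the inner bracket via $Q^{\top}(AA^{\top})Q = Q^{\top} Q \Lambda Q^{\top} Q = \Lambda$, so that the two $\Lambda^{-1}$ factors collapse with $\Lambda$ and we recover $W$.

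There is essentially no serious obstacle here; the work is bookkeeping. The only step that might look subtle is the use of $Q Q^{\top} A = A$, since $Q Q^{\top}$ is not the identity (it is the orthogonal projector onto the nonzero eigenspaces of $AA^{\top}$); but this identity is exactly what Lemma \ref{positivesemidef} provides, its content being that the column space of $A$ lies in the span of the columns of $Q$. Once that is in hand, all four axioms fall out in one or two lines each, and uniqueness of the Moore--Penrose pseudoinverse lets us conclude $W = A^{+}$.
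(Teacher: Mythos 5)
Your proof is correct and follows essentially the same route as the paper's: both verify the four Moore--Penrose axioms directly from the decomposition $AA^{\top}=Q\Lambda Q^{\top}$, using $Q^{\top}Q=I$ (equation (\ref{qqtI})) and the identity $A^{\top}QQ^{\top}=A^{\top}$ (equation (\ref{AtQQT})) of Lemma \ref{positivesemidef}, and then invoke uniqueness of the pseudoinverse. If anything, your write-up is slightly more explicit than the paper's, which disposes of the two symmetry conditions with a one-line remark, whereas you exhibit $AW=QQ^{\top}$ and check $(WA)^{\top}=WA$ by direct computation.
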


\begin{proof}
{Firstly, we have
\begin{align*}
    WAW&=(A^{\top}Q\Lambda^{-1}Q^{\top})A (A^{\top}Q\Lambda^{-1}Q^{\top})\\
    &=A^{\top}Q\Lambda^{-1}Q^{\top}(A A^{\top})Q\Lambda^{-1}Q^{\top}\\
    &=A^{\top}Q\Lambda^{-1}Q^{\top}(Q\Lambda Q^{\top})Q\Lambda^{-1}Q^{\top}\\
    &=A^{\top}Q(\Lambda^{-1}Q^{\top} Q\Lambda) (Q^{\top} Q)\Lambda^{-1}Q^{\top}\\
    &=A^{\top}Q \Lambda^{-1}Q^{\top}~~\hbox{(by (\ref{qqtI}))}~~\\
    &=W.
    \end{align*}

    Secondly, we have

    \begin{align*}
    AWA&=A(A^{\top}Q\Lambda^{-1}Q^{\top})A\\
    &=(AA^{\top})Q\Lambda^{-1}Q^{\top}A\\
    &=Q(\Lambda Q^{\top} Q\Lambda^{-1})Q^{\top}A\\
    &=Q Q^{\top}A~~\hbox{(by (\ref{qqtI}))}~~\\
    &=A ~~\hbox{(by (\ref{AtQQT})).}~~
    \end{align*}
     Finally, observe that $WA$ and $AW$ are real symmetric matrices.
     It is concluded that $W$ is the Moore-Penrose pseudo-inverse of $A$.
}
\end{proof}
\section{The function $\nu_B$ and some of its properties}\label{nu sec}

In this section, we consider an order on the set $\Delta_B$ and based on this define a function $\nu_B$ on the set $\Delta_B \times \Delta_B$ and inspect some of its properties; Some of the identities help us
 to find mutually orthogonal eigenvectors (corresponding to non-zero eigenvalues) and others are useful in future computations of the entries of some matrices. In fact, the component of the eigenvectors of the matrix $AA^{\top}$ can be expressed in terms of $\nu_B$ and using the properties of $\nu_B$ we will prove that these eigenvectors are mutually orthogonal.

For a given integer $b\geq 2$, the following linear order makes $\Delta_b$ a totally ordered set:
$$0\prec 1\prec \cdots \prec b-1\prec g$$
and when this order is induced on the product set $\Delta_b^{\ell}$ for an integer $\ell\geq 1$, a poset is obtained; More precisely, for two elements $x=x_1\cdots x_{\ell}$  and $y=y_1\cdots y_{\ell}$ with $x_i,y_i\in \Delta_b$, $(1\leq i \leq \ell)$, we have $x\preceq y$ if and only if $x_i \preceq y_i$ holds for $i=1,\cdots,\ell$. Below is presented the definition of two functions on $\Delta_b^{\ell}\times \Delta_b^{\ell}$.

 \begin{definition}\label{nuPrim-def} The function $\nu'$ is primarily defined on $\Delta_b\times \Delta_b$ by
\[
		\nu'(x,y)=\left\{
			\begin{matrix}1 & \text{if }\,\,  x=y= g,\\
                    -y & \text{if }\,\,  x=y\neq g,\\
                    1 & \text{if }\,\,  x\prec y,\\
				0 & \text{if } x\succ y.
			\end{matrix}
			\right.
	\]
and extended then to $\Delta_b^{\ell}\times \Delta_b^{\ell}$ by the product rule
\begin{equation*}
\nu'(x_1\cdots x_{\ell},y_1\cdots y_{\ell})=\nu'(x_1,y_1)\cdots\nu'(x_{\ell},y_{\ell})
\end{equation*}
\end{definition}

 \begin{definition}\label{nu-def} The function $\nu$ is primarily defined on $\Delta_b\times \Delta_b$ by
\[
		\nu(x,y)=\left\{
			\begin{matrix}-b & \text{if }\,\,  x=y=g,\\
-y & \text{if } x=y\neq g,\\
1 & \text{if }\,\, x\prec y,\\				
0 & \text{if } x\succ y.
			\end{matrix}
			\right.
	\]
and extended then to $\Delta_b^{\ell}\times \Delta_b^{\ell}$ by the product rule
\begin{equation*}
\nu(x_1\cdots x_{\ell},y_1\cdots y_{\ell})=\nu(x_1,y_1)\cdots\nu(x_{\ell},y_{\ell})
\end{equation*}
\end{definition}
\begin{remark} \label{rem-nu-nu'} The function $\nu'$ was firstly defined in \cite{kmer-b} (under the name $\nu$) to give a concrete description for an orthogonal non-zero eigendecomposition of $AA^{\top}$. It was also possible to use there, the function $\nu$ presented in the Definition \ref{nu-def}, for the same purpose. Moreover, as seen in the rest of this work, the advantage of $\nu$ is that it can be used in the current more general problem, as well.
\end{remark}
\begin{remark}\label{incidenceAlg}
The function $\nu'$ satisfies the property `` $\nu'(x,y)=0$ unless $x\preceq y$" and so does $\nu$; In other words, these functions are elements of the incidence algebra of the poset $\Delta_b^{\ell}$ (For the definition and some examples of this concept, see for instance Chapter 8 of  \cite{CameronNotes}).
\end{remark}

In the generalized problem, instead of one value of $b$ we have an $\ell$-tuple $(b_1,\cdots,b_{\ell})$, so it is natural to define $\ell$ functions $\nu_1,\cdots,\nu_{\ell}$ and obtain a new function $\nu_B$ defined on the product set $\Delta_B \times \Delta_B$ from their product; The following definition makes this procedure clear.

\begin{definition}\label{nui-nuB-def}
Consider the $\ell$-tuple $B=(b_1,b_2,\cdots,b_{\ell})$, where $b_i\geq 2$ is integer for $i=1,\cdots,\ell$.
For any $i$,\, \, $(1\leq i\leq \ell)$, we define the function $\nu_{i}$ on $\Delta_{b_i} \times \Delta_{b_i}$ as
	\[
		\nu_i(x,y)=\left\{
			\begin{matrix}-b_i & \text{if }\,\,  x=y=g,\\
-y & \text{if } x=y\neq g,\\
1 & \text{if }\,\, x\prec y,\\				
0 & \text{if } x\succ y.
			\end{matrix}
			\right.
	\]
Now the function $\nu_B$ is defined on the product set $\Delta_B\times \Delta_B$ by the following product rule
\begin{equation}\label{nuB}
\nu_B(x_1\cdots x_{\ell},y_1\cdots y_{\ell})=\prod_{i=1}^{\ell}\nu_i(x_i,y_i)
\end{equation}
\end{definition}

\begin{remark}
It is observed that this function is an element of the incidence algebra of the poset $\Delta_{B}$ which satisfies
	\[
		\sum_{x \preceq  z \preceq  y} \nu_B(x,z)=\left\{
			\begin{matrix}\prod_{i=1}^{\ell}(y'_i-2x'_i) & \text{if }\,\,  x\preceq y,\\
				0 & \text{otherwise. }
			\end{matrix}
			\right.
	\]
where the values $x'_i$, $(1\leq i\leq \ell),$ are defined
	\[
		x'_i=\left\{
			\begin{matrix} b_i & \text{if }\,\,  x_i=g,\\
x_i & \text{otherwise. }
			\end{matrix}
			\right.
	\]
and $y'_i$'s are defined similarly.
\end{remark}


 Some useful identities about $\nu_B$ are stated in Proposition \ref{mu.mu.sum}, but before presenting that, we need some definitions and lemmas.
 \begin{definition}\label{a0a1a2a3}  Let $\ell, k$ be integers with $0\leq k\leq \ell$ and let $v', v'' \in V_{\ell, \leq k} (B)$.
 Let $m,n$ be integers with $0\leq m,n \leq \ell$ such that $|G_{v'}|= \ell -n$ and $|G_{v''}|= \ell -m$. Define the sets $A_3, A_2, A_1$ and $A_0$ by
 $A_3=\overline{G}_{v'}\cap \overline{G}_{v''}$, $A_2=G_{v''}\setminus G_{v'}$, $A_1=G_{v'}\setminus G_{v''}$
 and $A_0= G_{v'}\cap G_{v''}$.
 \end{definition}

 \begin{lem} \label{A_is} Let $v',v'' \in V_{\ell,\leq k}$ and the sets $A_0$, $A_1$,$A_2$ and $A_3$ be as in Definition \ref{a0a1a2a3}.
    \begin{itemize}
        \item[\rm (i)] The sets $A_3, A_2, A_1$ and $A_0$ are mutually disjoint and
            $$A_0\cup A_1 \cup A_2 \cup A_3=\{1,2,\ldots \ell\}$$
              Moreover $A_0\neq \{1,2,\ldots \ell\}$ unless $v'=v''=g^{\ell}$.
        \item[\rm (ii)] If $A_1=A_2=\emptyset$, then $A_0=G_v=G_{v'}$ and $\overline{G}_{v'}=\overline{G}_{v''}=A_3$; If furthermore $v'\neq v''$, then there exists $i\in A_3$ such that $v'_i\neq v''_i$
    \end{itemize}
 \end{lem}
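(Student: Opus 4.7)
The lemma is essentially bookkeeping about a partition of $[\ell]$ induced by the gap patterns of $v'$ and $v''$, and I do not anticipate any substantive difficulty. The plan is to treat (i) by observing that the four sets $A_0, A_1, A_2, A_3$ are precisely the four atoms of the Boolean algebra generated by the two subsets $G_{v'}, G_{v''} \subseteq [\ell]$: for each index $i \in [\ell]$, exactly one of the four combinations (``$i \in G_{v'}$ and $i \in G_{v''}$'', ``$i \in G_{v'}$ and $i \notin G_{v''}$'', ``$i \notin G_{v'}$ and $i \in G_{v''}$'', ``$i \notin G_{v'}$ and $i \notin G_{v''}$'') holds, and these correspond respectively to $i\in A_0, A_1, A_2, A_3$. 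Mutual disjointness and the covering statement $A_0 \cup A_1 \cup A_2 \cup A_3 = [\ell]$ then follow in one stroke.

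For the ``moreover'' clause of (i), I would simply observe that $A_0 = [\ell]$ is equivalent to $G_{v'} \cap G_{v''} = [\ell]$, which forces $G_{v'} = G_{v''} = [\ell]$; since the only word in $\Delta_B$ whose gap set is all of $[\ell]$ is $g^{\ell}$, this gives $v' = v'' = g^{\ell}$. (Note $g^\ell \in V_{\ell,\le k;B}$ regardless of $k$ since $|g^\ell|_g = \ell \ge \ell - k$.) The reverse implication is trivial from the definitions of $A_0$ and $G_v$.

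For (ii), the hypothesis $A_1 = A_2 = \emptyset$ translates to $G_{v'} \setminus G_{v''} = G_{v''} \setminus G_{v'} = \emptyset$, i.e.\ the symmetric difference of the two gap sets is empty, which gives $G_{v'} = G_{v''}$. Substituting into the definitions then immediately yields $A_0 = G_{v'} \cap G_{v''} = G_{v'} = G_{v''}$ and $A_3 = \overline{G}_{v'} \cap \overline{G}_{v''} = \overline{G}_{v'} = \overline{G}_{v''}$ (here I read the statement ``$A_0 = G_v = G_{v'}$'' in the lemma as a minor typo for $A_0 = G_{v'} = G_{v''}$). For the final assertion, assume further $v' \neq v''$. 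Then $v'$ and $v''$ must disagree at some position $i$. On indices in $A_0$ both words equal $g$ by the definition of $G_{v'}$ and $G_{v''}$, so they agree there; since $A_1 \cup A_2 = \emptyset$, the disagreement must occur at some $i \in A_3$, giving $v'_i \neq v''_i$ as required.

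If any step deserves to be called the ``main'' one, it is only the last observation in (ii) that $v'$ and $v''$ automatically agree on $A_0$ because both entries are forced to be $g$; the rest is routine propositional logic about the atoms of a two-set Boolean algebra on $[\ell]$. I would keep the written proof correspondingly short.
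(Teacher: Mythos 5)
Your proof is correct, and it supplies exactly the routine verification the paper omits (its proof of Lemma \ref{A_is} is just ``The proof is straightforward''): the four sets are the atoms of the Boolean algebra generated by $G_{v'}$ and $G_{v''}$, and agreement on $A_0$ is forced since both entries there equal $g$. You also rightly flag the typo ``$A_0=G_v=G_{v'}$'' (it should read $A_0=G_{v'}=G_{v''}$) and correctly note that $g^{\ell}\in V_{\ell,\leq k;B}$ via $V_{\ell,0;B}$, so nothing is missing.
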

\begin{proof}{The proof is straightforward. }
\end{proof}
\begin{lem}\label{mu.mu} Let $w\in V_{\ell k}(B)$ and $v',v'' \in \Gamma_B$.
  \begin{itemize}
        \item[\rm (i)] If $\nu_B (w,v')\nu_B (w,v'')\neq 0$, then $G_w \subseteq A_0$.
        \item[\rm (ii)] If $G_w \subseteq A_0$, then $\nu_B (w,v')\nu_B (w,v'')=p_3 p_2 p_1 p_0$, where
        \begin{align*}
        p_0=&\prod_{i\in G_w}b_i^2 ,& p_1=\prod_{i\in A_1}{\nu}_i(w_i,v''_i),\\
        p_2=&\prod_{i\in A_2}{\nu}_i(w_i,v'_i),& p_3=\prod_{i\in A_3}{\nu}_i(w_i,v'_i){\nu}_i(w_i,v''_i)
        \end{align*}
    \end{itemize}
\end{lem}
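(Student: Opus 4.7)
The plan is to exploit the multiplicative definition (\ref{nuB}) of $\nu_B$ and carry out a position-by-position case analysis on $[\ell]$ using the partition $A_0 \cup A_1 \cup A_2 \cup A_3$ provided by Lemma \ref{A_is}.

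For part (i), I would argue by contrapositive. Fix $i \in G_w$, so $w_i = g$, and inspect the factor $\nu_i(g, v'_i)$. Because $g$ is the maximum of $\Delta_{b_i}$ under $\prec_i$, the case ``$w_i \prec_i v'_i$'' never occurs; the only way to obtain a nonzero value is $v'_i = g$, in which case $\nu_i(g,g) = -b_i$. The same reasoning applied to $v''$ forces $v''_i = g$, and hence $i \in G_{v'} \cap G_{v''} = A_0$. Since nonvanishing of the product
\[
\nu_B(w,v')\,\nu_B(w,v'') = \prod_{i=1}^{\ell} \nu_i(w_i, v'_i)\,\nu_i(w_i, v''_i)
\]
requires each factor to be nonzero, the conclusion $G_w \subseteq A_0$ follows.

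For part (ii), the hypothesis $G_w \subseteq A_0$ is precisely what I need in order to guarantee that $w_i \neq g$ on every position outside $A_0$. I would then split the product above over the four blocks $A_0, A_1, A_2, A_3$ and simplify each block in turn using Definition \ref{nui-nuB-def}. On $A_0$ one has $v'_i = v''_i = g$: positions $i \in G_w$ contribute $\nu_i(g,g)^2 = b_i^2$, while positions $i \in A_0 \setminus G_w$ satisfy $w_i \prec_i g$ and contribute $1 \cdot 1 = 1$; the combined contribution is $\prod_{i \in G_w} b_i^2 = p_0$. On $A_1$ one has $v'_i = g$ together with $w_i \neq g$, so $\nu_i(w_i, v'_i) = 1$ collapses and only $\nu_i(w_i, v''_i)$ remains, producing $p_1$. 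A symmetric computation on $A_2$ yields $p_2$. On $A_3$ both $v'_i$ and $v''_i$ are non-gap letters, nothing simplifies, and the block contributes $p_3$. Multiplying the four pieces gives $p_0 p_1 p_2 p_3$, as claimed.

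I do not expect a serious obstacle; the only point requiring care is the interplay between $G_w \subseteq A_0$ and the maximality of $g$ under $\prec_i$, which is used twice — first in part (i) to force $v'_i = v''_i = g$ whenever $w_i = g$, and then in part (ii) to collapse $\nu_i(w_i, g)$ to $1$ on $A_1 \cup A_2$ as well as on $A_0 \setminus G_w$. Both parts are otherwise a direct unpacking of the product formula.
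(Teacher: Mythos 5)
Your proposal is correct and takes essentially the same route as the paper: the paper's proof likewise factors $\nu_B(w,v')\,\nu_B(w,v'')=\prod_{j=0}^{3}\prod_{i\in A_j}\nu_i(w_i,v'_i)\,\nu_i(w_i,v''_i)$ over the partition from Lemma \ref{A_is} and then invokes the definition of $\nu_i$, dismissing part (i) as straightforward. Your write-up simply supplies the per-block case analysis (in particular the maximality of $g$ under $\prec_i$, which forces $v'_i=v''_i=g$ on $G_w$ and collapses $\nu_i(w_i,g)$ to $1$ elsewhere) that the paper leaves implicit.
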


\begin{proof}
{[Note for referees: The proof can be omitted.] The proof of part (i) is straightforward. The proof of part (ii) is obtained using
\begin{align*}
    \nu_B (w,v')\nu_B (w,v'')=& \prod_{i=1}^{\ell}{\nu}_i(w_i,v'_i){\nu}_i(w_i,v''_i)\\
    =&  \prod_{j=0}^{3}\prod_{i\in A_j}{\nu}_i(w_i,v'_i){\nu}_i(w_i,v''_i),
\end{align*}
and the definition of $\nu_i$.
}
\end{proof}



\begin{pro}\label{mu.mu.sum} Let $v',v''\in V'_{\ell, \leq k;B}$. Then
\begin{itemize}
    \item [\rm (i)] ${\displaystyle\sum_{w\in V_{\ell k}}\nu_B(w,v')=
    \left\{
		                     \begin{array}{rl}
		                     		(-1)^{\ell-k} S_{\ell-k}(B),\,\, & \hbox{if $v'=g^{\ell}$,} \\
		                            	\\
		                            	0,\,\,  &\hbox{otherwise. }
		                      \end{array}
		                       \right.}$
    \item [\rm (ii)]${\displaystyle\sum_{w\in V_{\ell k}}\nu_B(w,v')\nu_B(w,v'')=
    \left\{
		                     \begin{array}{rl}
		   	{\displaystyle S_{\ell-k}(B(G_{v'}))\prod_{i\in G_{v'}} b_i \prod_{i\in \overline{G}_{v'}} (v'_i+{v'_i}^2)},\,\, & \hbox{if $v'=v''$,} \\
		                            	\\
		                            	0,\,\,  &\hbox{otherwise. }
		                      \end{array}
		                       \right.}$
\end{itemize}
\end{pro}
\begin{proof}{
\begin{itemize}
    \item [\rm (i)] For $w\in V_{\ell,k}$ we have
    $\nu_B(w,g^\ell)=\prod_{i\in G_w}(-b_i)$,
    hence we obtain
    $$\displaystyle\sum_{w\in V_{\ell k}}\nu_B(w,g^{\ell})=(-1)^{\ell-k} S_{\ell-k}(B)$$
which proves part(i) in the case $v'=g^{\ell}$.

Now suppose that $v' \in V_{\ell;\leq k;B}\setminus \{g^{\ell}\}$, hence for some $1\leq j \leq \ell$,\, $v'_j\neq g$. Then from
    \begin{align*}
    {\displaystyle \sum_{w\in V_{\ell k}}\nu_B(w,v')}=& \sum_{w_i=0}^{b_i-1} \prod_{i=1}^{\ell} \nu_i (w_i,v'_i)\\
   =&\prod_{i=1}^{\ell}\sum_{w_i=0}^{b_i-1}\nu_i (w_i,v'_i),
    \end{align*}
    by using $\sum_{w_j=0}^{b_j-1}\nu_j (w_j,v'_j)=0$,
     the right side is simplified to $0$, as required.

\item [\rm (ii)] To prove this part, observe that if $|A_0|<\ell-k$, each summand in the left, is zero and there is nothing to prove. So, let $|A_0|\geq \ell-k$, setting
$X_{\ell k}(B,G)=\{w\in V_{\ell k}(B): G_w=G\}$ we have
\begin{equation}\label{sumnunux}\sum_{w\in V_{\ell k}(B)}\nu_B(w,v') \nu_B(w,v'')=\sum_{G\in {{A_0}\choose{\ell-k}}} \sum_{w\in X_{\ell k}(B,G)}\nu_B(w,v') \nu_B(w,v'')\end{equation}

 First we compute the summand $\sum_{w\in X_{\ell k}(B,G)}\nu(w,v') \nu(w,v'')$, for a fixed $G\in {{A_0}\choose{\ell-k}}$.
 For this, without loss of generality, let $A_3=\{1, \ldots, a_3\}$, $A_2=\{a_3+1, \ldots, a_3+a_2\}$, $A_1=\{a_3+a_2+1, \ldots, a_3+a_2+a_1\}$ and
 $A_0=\{a_3+a_2+a_1+1, \ldots, \ell\}$, where $a_1,a_2,a_3$ are non-negative integers satisfying $a_1+a_2+a_3\leq k$ (this inequality is concluded from $|A_0|\geq \ell-k$). Moreover, without loss of generality, let $G=\{k+1, \ldots, \ell\}$. Now $w\in X_{\ell k}(B, G)$ can be factorized in the form $w=qrstg^{\ell-k}$, with $|q|=a_3$, $|r|=a_2$, $|s|=a_1$ and $|t|=a_0 - (\ell-k)$ and when $w$ runs over $X_{\ell k}(B,G)$, each of the words $q,r,s$ and $t$ runs over a proper set accordingly. By part (ii) of Lemma \ref{mu.mu} we obtain
 \begin{equation}
 \label{pis}
  \sum_{w\in X_{\ell k}(B,G)}\nu_B(w,v') \nu_B(w,v'')=P_3 P_2 P_1 P_0,
 \end{equation}
 where
         \begin{align*}
        P_0=&\prod_{i\in A_0}b_i \prod_{i\in G}b_i,& P_1=\prod_{i=a_3+a_2+1}^{a_3+a_2+a_1}\sum_{w_i=0}^{b_i-1}{\nu}_i(w_i,v'_i),\\
        P_2=&\prod_{i=a_3+1}^{a_3+a_2}\sum_{w_i=0}^{b_i-1}{\nu}_i(w_i,v''_i),&
        P_3=\prod_{i=1}^{a_3}\sum_{w_i=0}^{b_i-1}{\nu}_i(w_i,v'_i){\nu}_i(w_i,v''_i)
        \end{align*}

  [Note for referees: These three lines may be omitted.] Note that the above formula for $P_0$ is justified using Lemma \ref{mu.mu} (ii) as follows:
  {\small
  \begin{equation*}P_0=p_0\prod_{i=a_1+a_2+a_3+1}^k (\sum_{w_i=0}^{b_i-1}1)=(\prod_{i=k+1}^{\ell}b_i^2)\prod_{i=a_1+a_2+a_3+1}^k b_i=\prod_{i=k+1}^{\ell}b_i\prod_{i=a_1+a_2+a_3+1}^{\ell}b_i \end{equation*}
  }
Now, we consider two cases:
\\
{\bf Case 1.} $v'=v''$;
In this case $A_1=A_2=\emptyset$, hence $P_1=P_2=1$ and
$$ \sum_{w\in X_{\ell k}(B,G)}\nu_B^2(w,v')=P_3P_0$$
It is easily seen that in this case $P_3={\displaystyle \prod_{i\in {\overline G}_{v'}} (v'_i+{v'_i}^2)}$,
so,
\begin{align*}\sum_{w\in V_{\ell k}(B)}\nu_B(w,v') \nu_B(w,v'')=&\sum_{G\in {{A_0}\choose{\ell-k}}}  \sum_{w\in X_{\ell k}(B,G)}\nu_B(w,v') \nu_B(w,v'')\\
=&\sum_{G\in {{A_0}\choose{\ell-k}}} \prod_{i\in {\overline G}_{v'}} (v'_i+{v'_i}^2)\prod_{i\in A_0}b_i \prod_{i\in G}b_i\\
=&\prod_{i\in {\overline G}_{v'}} (v'_i+{v'_i}^2)\prod_{i\in A_0}b_i\sum_{G\in {{A_0}\choose{\ell-k}}}  \prod_{i\in G}b_i\\
=& \prod_{i\in {\overline G}_{v'}} (v'_i+{v'_i}^2)\prod_{i\in G_{v'}}b_i\,\,\,\,\, S_{\ell-k}(B(G_{v'}))
\end{align*}\\
{\bf Case 2.}  $v'\neq v''$;
It is easily proved that if $A_1\neq \emptyset$ then $P_1=0$ and if $A_2\neq \emptyset$ then $P_2=0$. Otherwise, if $A_1=A_2=\emptyset$, then
by Lemma \ref{A_is} (ii), $A_3\neq \emptyset$ and there exists $i\in A_3$ with $v'_i\neq v''_i$; It is easily proved that for this $i$,
$\sum_{w_i}\nu_i(w_i,v'_i) \nu_i(w_i,v''_i)=0$ thus $P_3=0$. Hence, the hypothesis $v'\neq v''$ implies that the right side of (\ref{pis}) is zero in either case, and we get the result by (\ref{sumnunux}).
\end{itemize}
}\end{proof}

\begin{pro}\label{someNuId}
\begin{itemize}
\item[\rm(i)] For any $u\in U_{\ell}$ and $v'\in V'_{\ell, \leq k}$ we have
\begin{equation}\label{nuId1}
				\sum_{y\in M_{\ell k}(u)} \nu_B(y,v')=(-1)^{\ell-k}S_{\ell-k}(B(G_{v'}))\, \nu_B(u,v'),
			\end{equation}
\item[\rm(ii)] For any $v\in V_{\ell k}$ and $v'\in V'_{\ell, \leq k}$ we have
\begin{equation}
\label{Az2}
\sum_{u\in M'_{\ell, k;B}(v)}\nu_B(u,v')=(-1)^{\ell-k} \nu_B(v,v') \end{equation}
\end{itemize}
\end{pro}
\begin{proof}
    {[Note for referees: The proof is similar to that of the Proposition 2 of \cite{kmer-b}, hence, it can be omitted.]
    \begin{itemize}
    \item[\rm(i)] If the summand $\nu(y,v')$ is nonzero, then $G_y \subseteq G_{v'}$, on the other hand the non-gaped positions of all such $y$'s are the same as $u$.
            Let $G_y= \{x_1, x_2, \ldots, x_{\ell-k}\}$, then
            $\nu_B(y,v')=(-1)^{\ell-k} b_{x_1}b_{x_1}\ldots b_{x_{\ell-k}}\, \nu_B(u,v')$. Therefore
            \begin{align*}\sum_{y\in M_{\ell k}(u)} \nu_B(y,v')=& (-1)^{\ell-k}\nu_B(u,v')\sum_{\{x_1, x_2, \ldots, x_{\ell-k}\}\subseteq G_{v'}}b_{x_1}b_{x_1}\ldots b_{x_{\ell-k}}\\
            =& (-1)^{\ell-k} S_{\ell-k}(B(G_{v'}))\, \nu_B(u,v'),
            \end{align*} as required.
    \item[\rm(ii)]
We distinguish two cases:

 			{\bf Case (a).} Suppose that $G_{v}\subseteq G_{v'}$. Then for any $u\in M'_{\ell, k;B}(v)$ we have	
				$$\nu_B(u,v')=\prod_{i\in \overline{G}_{v'}}\nu_i(u_i,v'_i)=\prod_{i\in \overline{G}_{v'}}\nu_i(v_i,v'_i)$$ and since there are totally  $\prod_{i\in G_{v'}} b_i $ such words $u$, the left side of equation (\ref{Az2}) equals
\begin{align*}
\prod_{i\in G_v} b_i \prod_{i\in \overline{G}_{v'}}\nu_i(v_i,v'_i)&=(-1)^{|G_v|}\prod_{i\in G_{v}}\nu_i(g,g)
\prod_{i \in G_{v'}\setminus G_v} \nu_i(v_i,g) \prod_{i\in \overline{G}_{v'}}\nu_i(v_i,v'_i)\\
&=(-1)^{\ell-k} \prod_{i=1}^{\ell}\nu_i(v_i,v'_i)\\
&=(-1)^{\ell-k} \nu_B(v,v'),
\end{align*}
as required.
			
			{\bf Case (b).} Suppose that $G_{v}\not \subseteq G_{v'}$, consequently $G_v \setminus G_{v'}\neq \emptyset$.
			Now for any $i\in G_v \setminus G_{v'}$ we have $\nu(v_i,v'_i)=0$, thus the right side of (\ref{Az2}) is $0$; The following argument shows that the left side is $0$ as well: The nonzero summands in the left side of (\ref{Az2}) are obtained from elements $u\in X$ where the subset $X\subseteq U_{\ell;B}$ is given by
			$$X=\{u\in U_{\ell;B} : u_i\leq v_i {\rm \,\, for \,\,} i\in G_{v}\setminus G_{v'} {\rm \,\,and\,\,} u_i=v_i {\rm \,\, for \,\,} i\in \overline{G}_v \}.$$
			Thus we obtain
			\begin{align*}
				\sum_{u\in M'_{\ell k;B}(v)} \nu_B(u,v')&=\sum_{u\in X} \nu_B(u,v')\\
				&=\sum_{u\in X} \prod_{i=1}^\ell  \nu_i(u_i,v'_i)\\
				&=\sum_{u\in X} \left( \prod_{i\in \overline{G}_v} \nu_i(u_i,v'_i) \prod_{i\in G_v \setminus G_{v'}} \nu_i(u_i,v'_i) \prod_{i\in G_v \cap G_{v'}} \nu_i(u_i,v'_i)\right)\\
				&=\sum_{u\in X} \left( \prod_{i\in \overline{G}_v} \nu_i(v_i,v'_i) \prod_{i\in G_v \setminus G_{v'}} \nu_i(u_i,v'_i) \prod_{i\in G_v \cap G_{v'}} \nu_i(u_i,g)\right)\\
				&=\left( \prod_{i\in \overline{G}_v} \nu_i(v_i,v'_i) \right) \left( \prod_{i\in G_v \setminus G_{v'}} \sum_{u_i=0}^{v'_i}\nu_i(u_i,v'_i)\right)
				\left( \prod_{i\in G_v \cap G_{v'}} \sum_{u_i=0}^{b_i-1}\nu_i(u_i,g)\right)\\
                &=\left( \prod_{i\in \overline{G}_v} \nu_i(v_i,v'_i) \right) \left( \prod_{i\in G_v \setminus G_{v'}} \sum_{u_i=0}^{v'_i}\nu_i(u_i,v'_i)\right)
				\left( \prod_{i\in G_v \cap G_{v'}} b_i\right)\\
				&=0
			\end{align*}
			The last identity holds because for any $i\in G_v\setminus G_{v'}$ we have $$\sum_{u_i=0}^{v'_i}\nu_i(u_i,v'_i)=\sum_{u_i=0}^{v'_i-1}1-v'_i=0.$$\\
Thus (\ref{Az2}) is true in either case.
    \end{itemize}
    }
    \end{proof}

\begin{definition} \label{defPQ} Let $u\in \Sigma_B=U_{\ell;B} $ and $v\in \Delta_B$. Then $P(u,v)$ and $Q(u,v)$ are defined as below
\begin{align*}
P(u,v)&=\{i:1\leq i\leq \ell , v_i\neq g, v_i=u_i\}\\
Q(u,v)&=\{i:1\leq i\leq \ell , v_i\neq g, v_i\neq u_i\}
\end{align*}
We denote $P(u,v)$ and $Q(u,v)$ by $P$ and $Q$, respectively, if there is no danger of confusion.
\end{definition}
With the above definition, it is obvious that
$$|P(u,v)|+|Q(u,v)|=|\overline{G}_v|.$$
Particularly, if $v\in V_{\ell,k}$ and $|P|=p$ then $|Q|=k-p$.

\begin{pro} \label{NuIdentity} Let $u\in \Sigma_B=U_{\ell;B} $ and $v\in \Delta_B$. Recall the notation of Definition \ref{defPQ}.
        \begin{itemize}
            \item[\rm(i)]  For $1\leq i\leq \ell $, let
                $$\phi_i(u,v)=\sum_{j=\max\{1,v_i\}}^{b_i-1} \frac{\nu_i(v_i,j)\nu_i(u_i,j)}{j(j+1)}.$$
            Then we have
                $$\phi_i(u,v)=\left\{
                             \begin{array}{ll}
                                    \frac{b_i-1}{b_i}, & \hbox{if $i\in P$;} \\
                                        \\
                                        \frac{-1}{b_i}, & \hbox{otherwise, i.e. if } i\in Q.
                              \end{array}
                               \right.$$
            \item[\rm(ii)] Let $G$ be a given subset of $\{1,\cdots,\ell\}$ with $G_v \subseteq G$. Then the following identity holds
            \begin{equation}
                \label{fracNuId} {\displaystyle \sum_{v'\in \Gamma_B\,\, ,G_{v'}=G} \frac{\nu_B(v,v')\nu_B(u,v')}{ {\displaystyle \prod_{i\in \overline{G}_{v'}} (v'_i+{v'_i}^2)}}
                }= \frac{\displaystyle (-1)^{|Q\setminus G|+|G_v|} \prod_{i \in G_v} b_i \prod_{i\in P \setminus G}(b_i-1)}{\displaystyle \prod_{i \in \overline{G}}b_i}.
            \end{equation}
        \end{itemize}
    \end{pro}

\begin{proof}{[Note for referees: The proof is similar to that of the Proposition 3 of \cite{kmer-b}, hence, it can be omitted.]
        \begin{itemize}
            \item[\rm(i)]  If $i\in P$ and $v_i>0$ then
            \begin{align*}
                \phi_i(u,v)&=\sum_{j=v_i}^{b_i-1} \frac{\nu_i(v_i,j)\nu_i(u_i,j)}{j(j+1)}\\
                &=\sum_{j=v_i}^{b_i-1} \frac{\nu_i(v_i,j)^2}{j(j+1)}\\
                &=\frac{v_i^2}{v_i(v_i+1)}+\sum_{j=v_i+1}^{b_i-1} \frac{1}{j(j+1)}\\
                &=\frac{v_i}{v_i+1}+\bigg(\frac{1}{v_i+1}-\frac{1}{b_i}\bigg)\\
                &=\frac{b_i-1}{b_i}.
            \end{align*}
            If $i\in P$ and $v_i=0$, then given $j\geq1$, we have
            $\nu_i(u_i,j)=\nu_i(v_i,j)=1$, hence,
            \begin{align*}
                \phi_i(u,v)&=\sum_{j=1}^{b_i-1} \frac{1}{j(j+1)}\\
                &=\frac{b_i-1}{b_i}.
            \end{align*}

             The case $i\in Q$ is done similarly.
            \item[\rm(ii)]
            Note that if $G_{v} \subseteq G_{v'}=G$ we have
            \begin{align*} \nu_B(v,v')\,\nu_B(u,v')&=\prod_{i=1}^{\ell} \nu_i(v_i,v'_i)\,\nu_i(u_i,v'_i)\\
            &=\prod_{i\in G_v} \nu_i(g,g)\nu_i(u_i,g) \prod_{i\in G\setminus G_v} \nu_i(v_i,g)\,\nu_i(u_i,g) \prod_{i\in \overline{G}} \nu_i(v_i,v'_i)\,\nu_i(u_i,v'_i)\\
            &=\prod_{i\in G_v} (-b_i) \prod_{i\in \overline{G}} \nu_i(v_i,v'_i)\,\nu_i(u_i,v'_i)\\
            &=(-1)^{|G_v|}\prod_{i\in G_v} b_i \prod_{i\in \overline{G}} \nu_i(v_i,v'_i)\,\nu_i(u_i,v'_i)            \end{align*}
            hence
            \begin{align*}
                {\displaystyle \sum_{v'\in \Gamma_B\,\, ,G_{v'}=G} \frac{\nu_B(v,v')\,\nu_B(u,v')}{ {\displaystyle \prod_{i\in \overline{G}_{v'}} (v'_i+{v'_i}^2)}}
                }
                &=(-1)^{|G_v|} {\displaystyle \prod_{i\in G_v} b_i} {\displaystyle \sum_{v'\in \Gamma_B\,\, ,G_{v'}=G} \,\,\prod_{i\in \overline{G}} \frac{ {\nu_i(v_i,v'_i)\nu_i(u_i,v'_i)}} { {v'_i(v'_i+1)}}
                }\\
                &=(-1)^{|G_v|} {\displaystyle \prod_{i\in G_v} b_i  \prod_{i \in \overline{G}} \,\,\sum_{j=\max\{1,v_i\}}^{b_i-1} \frac{\nu_i(v_i,j)\nu_i(u_i,j)}{j(j+1)}}\\
                &=(-1)^{|G_v|} {\displaystyle \prod_{i\in G_v} b_i  \prod_{i \in \overline{G}}\phi_i(u,v)}\\
                &=(-1)^{|G_v|} {\displaystyle \prod_{i\in G_v} b_i  \prod_{i \in \overline{G}\cap Q}\frac{-1}{b_i} \prod_{i \in \overline{G}\cap P}\frac{b_i-1}{b_i}}\\
                &=\frac{\displaystyle (-1)^{|Q \setminus G|+|G_v|} \prod_{i \in G_v} b_i \prod_{i\in P\setminus G}(b_i-1)}{\displaystyle \prod_{i \in \overline{G}}b_i}.
                \end{align*}

        \end{itemize}}
\end{proof}

\section{Orthonormal nonzero eigendecomposition of the matrices $AA^{\top}$ and $A^{\top}A$}\label{eigen sec}

In this section we give an orthonormal nonzero eigendecomposition of the matrices $AA^{\top}$ and $A^{\top}A$. Eigenvectors of these matrices are the elementary symmetric polynomials and the entries of the corresponding eigenvectors are given in terms of the function $\nu_B$. Using the properties of $\nu_B$ we show that these
eigenvectors are mutually orthogonal.

    \begin{definition} Let $n\leq k\leq \ell$ be integers. Given $B=(b_1,\cdots,b_{\ell})$ and $v'\in V'_{\ell,n;B}$, we define the column vector $x_{v'}^{\ell,k,n}$ as a vector whose rows are indexed by the elements of $V_{\ell,k;B}$ with entries $x_{v'}^{\ell,k,n}(w)=(-1)^{\ell-k}\nu_B(w,v')$. The column vector $z_{v'}^{\ell,n}$ is then defined as $z_{v'}^{\ell,n}=x_{v'}^{\ell,\ell,n}$; In other words, $z_{v'}^{\ell,n}$ is a column vector whose rows are indexed by elements $u$ of $U_{\ell;B}$ with entries $z_{v'}^{\ell,n}(u)=\nu_B(u,v')$. When there is no need to emphasize the parameters $\ell$, $k$ and $n$, we simply write $x_{v'}$ and $z_{v'}$.
    \end{definition}

    \begin{pro}\label{p-nrm-x}
Let $v' \in V'_{\ell, \leq k}$ then the following identity holds:
$$\parallel x_{v'}\parallel^2=  S_{\ell -k}(B(G_{v'}))\prod_{i\in {\overline{G}}(v')} (v'_i+{v'_i}^2)\prod_{i\in G_{v'}} b_i$$
\end{pro}
\begin{proof}{See proposition \ref{mu.mu.sum} (ii)}
\end{proof}

The following proposition contains a generalization of Proposition 2 of \cite{kmer-b}:\\

    \begin{pro} \label{matIdThm}
         Let $0\leq n\leq k\leq \ell$ and $v'\in V'_{\ell n}$. The following matrix identities hold.
           \begin{itemize}
            \item[\rm(i)] $A_{\ell k;B}^{\top}x_{v'}^{\ell k n}=S_{\ell-k}(B(G_{v'}))\, z_{v'}^{\ell n}$.
            \item[\rm(ii)] $A_{\ell k;B}z_{v'}^{\ell n}=x_{v'}^{\ell k n}$.
            \item[\rm(iii)] $A_{\ell k;B}A_{\ell k;B}^{\top} x_{v'}^{\ell k n}= S_{\ell-k}(B(G_{v'}))\, x_{v'}^{\ell k n}$.
            \item[\rm(iv)] $A_{\ell k;B}^{\top}A_{\ell k;B} z_{v'}^{\ell n}=S_{\ell-k}(B(G_{v'}))\, z_{v'}^{\ell n}$.
            \item[\rm(v)] For any two distinct words $v\in V'_{\ell n_1}$ and $u\in V'_{\ell n_2}$, the vectors
            $x_{v}^{\ell k n_1}$ and $x_{u}^{\ell k n_2}$ are orthogonal.
            \item[\rm(vi)] For any two distinct words $v\in V'_{\ell n_1}$ and $u\in V'_{\ell n_2}$, the vectors
            $z_{v}^{\ell n_1}$ and $z_{u}^{\ell n_2}$ are orthogonal.
        \end{itemize}
 \end{pro}
\begin{proof}
    {The proofs of (i) and (ii) are concluded from definitions of $x_{v'}$ and $z_{v'}$ and Proposition \ref{someNuId}. Combining (i) and (ii) yields (iii) and (iv). The proofs of (v) is concluded from Proposition \ref{mu.mu.sum}(ii). The same proposition yields part (vi) (by setting $k=\ell$).}
\end{proof}

\begin{pro}\label{egenvalueAAT,all}
Let $0\leq k\leq \ell$. Then the set $\{S_{\ell-k}(B(G_{v'})): v'\in V'_{\ell, \leq k}\}$ are all non-zero
eigenvalues of the matrix $A_{\ell k;B}A_{\ell k;B}^{\top}$. The set
$\{x_{v'}: v'\in V'_{\ell,\leq k}\}$ is a complete set of eigenvectors corresponding to nonzero eigenvalues.
Moreover, these eigenvectors are pairwise orthogonal.
\end{pro}

\begin{proof}
{First by Proposition \ref{matIdThm}, we conclude that for every $v'\in V'_{\ell, \leq k}$ we have
$A_{\ell k}A_{\ell k}^{\top} x_{v'}= S_{\ell-k}(B(G_{v'})) x_{v'}$. Hence $S_{\ell-k}(B(G_{v'}))$
is an eigenvalue of $A_{\ell k}A_{\ell k}^{\top}$ and $x_{v'}$ is its corresponding eigenvector. By Proposition \ref{matIdThm} (v), we know that these eigenvectors are pairwise orthogonal. To complete the proof,
We should prove that there are no more nonzero eigenvalues and there are no more independent eigenvectors corresponding to these eigenvalues.

Since the matrix $A_{\ell k;B}A_{\ell k;B}^{\top}$ is symmetric, positive semi-definite, all whose eigenvalues are nonnegative. So, to prove the Theorem it suffices to prove that
$$trace (A_{\ell k;B}A_{\ell k;B}^{\top})={\displaystyle\sum_{v'\in V'_{\ell n}}  }S_{\ell-k}(B(G_{v'})).$$
It can easily seen that $trace (A_{\ell k;B}A_{\ell k;B}^{\top})={{\ell}\choose k} {\displaystyle\prod_{i=1}^{\ell}b_i}$.
On the other hand, the summation of the nonzero eigenvalues of obtained from Proposition \ref{matIdThm} (iii), is simplified as
\begin{align*}
{\displaystyle\sum_{v'\in V'_{\ell n}}}  S_{\ell-k}(B(G_{v'}))&= \sum_{G\subseteq [{\ell}]: |G|\geq \ell-k} \,\,\,\
\sum_{v'\in V'_{\ell n}: G_{v'}=G}S_{\ell-k}(B(G_{v'}))\\
&=\sum_{{\overline{G}}\subseteq [\ell]:|{\overline{G}}|\leq k}\,\,\,\, \prod_{i\in {\overline{G}}}(b_i-1) S_{\ell-k}(B(G))\\
&={{\ell}\choose k} {\displaystyle\prod_{i=1}^{\ell}b_i}  ~~~\hbox{(by Lemma \ref{s_i x 2}),}~~~
\end{align*}
 which completes the proof.}
\end{proof}

\begin{remark} With the same assumptions of Proposition \ref{egenvalueAAT,all} and using a similar method, it is concluded from Proposition \ref{matIdThm} (iv) that a complete set of eigenvectors of $A_{\ell k;B}^{\top}A_{\ell k;B}$ corresponding to nonzero eigenvalues is $\{z_{v'}: v'\in V'_{\ell,\leq k}\}$. Furthermore, by \ref{matIdThm} (vi), these vectors are pairwise orthogonal.
\end{remark}

The following corollary is immediately concluded from the previous proposition.
 \begin{cor} {We have $\rk(A_{\ell,k;B}A_{\ell,k;B}^{\top})=R_k(B)$. Consequently, $\rk(A_{\ell,k;B})=R_k(B)$.}
\end{cor}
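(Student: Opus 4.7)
The plan is to read off the rank of $A_{\ell,k;B}A_{\ell,k;B}^{\top}$ directly from Proposition \ref{egenvalueAAT,all} and then transfer it to $A_{\ell,k;B}$ using the standard linear algebra fact recorded in Section \ref{PreLinAlg}.

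First I would recall that $A_{\ell,k;B}A_{\ell,k;B}^{\top}$ is a real symmetric (in fact positive semi-definite) matrix; hence its rank equals the total number of nonzero eigenvalues counted with multiplicity, which is the same as the number of linearly independent eigenvectors attached to nonzero eigenvalues. Proposition \ref{egenvalueAAT,all} supplies exactly such a complete orthogonal system, namely $\{x_{v'}:v'\in V'_{\ell,\leq k;B}\}$. Since these vectors are pairwise orthogonal and nonzero (their squared norms, computed in Proposition \ref{p-nrm-x}, are products of positive factors), they are linearly independent, and they are complete by the last statement of Proposition \ref{egenvalueAAT,all}. Therefore
\[
\rk(A_{\ell,k;B}A_{\ell,k;B}^{\top}) \;=\; |V'_{\ell,\leq k;B}|.
\]

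Next I would invoke the size formula recorded in the example at the end of Section \ref{sym sec}, namely $|V'_{\ell,\leq k;B}|=R_k(B)$, to conclude $\rk(A_{\ell,k;B}A_{\ell,k;B}^{\top})=R_k(B)$. Finally, the ``consequently'' part follows from the standard identity $\rk(A)=\rk(AA^{\top})$, which is explicitly quoted in Section \ref{PreLinAlg} (as a property of the Gram matrix of $A$). Applying this with $A=A_{\ell,k;B}$ yields $\rk(A_{\ell,k;B})=R_k(B)$, as required.

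There is no real obstacle here: the whole argument is a direct bookkeeping step combining three ingredients that have already been established—the explicit orthogonal eigenbasis from Proposition \ref{egenvalueAAT,all}, the enumeration $|V'_{\ell,\leq k;B}|=R_k(B)$, and the general identity $\rk(A)=\rk(AA^{\top})$. The only mild point to verify is that the eigenvectors $x_{v'}$ are indeed nonzero, but this is immediate from Proposition \ref{p-nrm-x} since $S_{\ell-k}(B(G_{v'}))>0$ and the factors $v'_i+{v'_i}^2$ for $i\in \overline{G}_{v'}$ are all positive (because $v'\in \Gamma_B$ forces $v'_i\ge 1$ on non-gapped positions).
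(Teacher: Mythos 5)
Your proof is correct and follows the same route as the paper, which presents this corollary as an immediate consequence of Proposition \ref{egenvalueAAT,all}: you simply make explicit the bookkeeping the paper leaves implicit, namely that the $|V'_{\ell,\leq k;B}|=R_k(B)$ pairwise orthogonal eigenvectors (nonzero by Proposition \ref{p-nrm-x}) give the rank of the symmetric matrix $A_{\ell,k;B}A_{\ell,k;B}^{\top}$, and that $\rk(A)=\rk(AA^{\top})$ as quoted in Section \ref{PreLinAlg}. No gaps; your added verification that each $\|x_{v'}\|^2>0$ is a worthwhile detail.
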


    \begin{definition} \label{Upsilon} Given $B=(b_1,\cdots,b_{\ell})$, we define $\Upsilon_{\ell, k;B}$ as a matrix whose rows and columns are indexed by the elements of $V_{\ell, k;B}$ and $V'_{\ell ,\leq k;B}$ respectively and whose entries are given by $\Upsilon_{\ell, k;B}(w,v')=(-1)^{|w|-|w|_g}\nu_B(w,v')$. In other words, the columns of $\Upsilon_{\ell, k;B}$ are exactly the vectors $x_{v'}$ for $v'\in V'_{\ell ,\leq k;B}$.
    \end{definition}

\begin{definition} \label{Q-E-D-def} We define the matrices $\Lambda$, $E$, $D$ and $Q_{\ell,k;B}$ as follows.
            \begin{align}
            \Lambda &= {\rm diag}(S_{\ell-k}(B(G_{v'})))_{v'\in V'_{\ell,\leq k;B}}\label{lambda}\\
                E&={\rm diag}(\frac{1}{||x_{v'}||})_{v'\in V'_{\ell,\leq k;B}},\label{edef}\\
                Q_{\ell k;B}&=\Upsilon_{\ell, k;B} E,\label{qdef}
            \end{align}
\end{definition}
By the above definition, we obtain  $Q_{\ell, k;B}={\rm diag}(\frac{x_{v'}}{||x_{v'}||})_{v'\in V'_{\ell,\leq k}}$ and using Proposition \ref{egenvalueAAT,all} we obtain the following orthonormal nonzero eigendecomposition for $A_{\ell k} A_{\ell k}^{\top}$.

    \begin{thm} \label{eigenDecomp}
         With the above definitions, the matrix $A_{\ell k;B} A_{\ell k;B}^{\top}$ admits the orthonormal nonzero eigendecomposition \begin{align}\label{qLamQt}
         A_{\ell k;B} A_{\ell k;B}^{\top}=Q_{\ell, k;B}\Lambda Q_{\ell, k;B}^{\top}.
         \end{align}
    \end{thm}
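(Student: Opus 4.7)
The plan is to assemble the desired decomposition from pieces already in hand, with only routine verification remaining.

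First, I would invoke Proposition \ref{egenvalueAAT,all}, which states that the family $\{x_{v'} : v' \in V'_{\ell,\leq k;B}\}$ is a complete set of pairwise orthogonal eigenvectors of $A_{\ell k;B}A_{\ell k;B}^{\top}$ corresponding to all nonzero eigenvalues, with $A_{\ell k;B}A_{\ell k;B}^{\top} x_{v'} = S_{\ell-k}(B(G_{v'}))\, x_{v'}$. Dividing each $x_{v'}$ by $\|x_{v'}\|$ (whose explicit value is provided by Proposition \ref{p-nrm-x}) turns the collection into an orthonormal family. By Definitions \ref{Upsilon} and \ref{Q-E-D-def}, these normalized vectors are precisely the columns of $Q_{\ell,k;B} = \Upsilon_{\ell,k;B} E$, so orthonormality of the columns gives $Q_{\ell,k;B}^{\top} Q_{\ell,k;B} = I$.

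Second, I would assemble the column-wise eigenvalue equations into a single matrix identity. Because the columns of $Q_{\ell,k;B}$ are indexed by the same set $V'_{\ell,\leq k;B}$ as the diagonal of $\Lambda$, and the diagonal entry associated to $v'$ is the eigenvalue $S_{\ell-k}(B(G_{v'}))$ for $x_{v'}/\|x_{v'}\|$, the equations $AA^{\top}(x_{v'}/\|x_{v'}\|) = S_{\ell-k}(B(G_{v'}))\, (x_{v'}/\|x_{v'}\|)$ combine (column by column) into
\[
A_{\ell k;B}A_{\ell k;B}^{\top}\, Q_{\ell,k;B} = Q_{\ell,k;B}\, \Lambda.
\]

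Third, I would post-multiply by $Q_{\ell,k;B}^{\top}$ and show that the left-hand side simplifies back to $A_{\ell k;B}A_{\ell k;B}^{\top}$. This requires the identity $A_{\ell k;B}A_{\ell k;B}^{\top}\, Q_{\ell,k;B} Q_{\ell,k;B}^{\top} = A_{\ell k;B}A_{\ell k;B}^{\top}$, which is exactly the content of equation \eqref{AtQQT} of Lemma \ref{positivesemidef} applied to the symmetric positive semi-definite matrix $AA^{\top}$ (alternatively: $Q_{\ell,k;B}Q_{\ell,k;B}^{\top}$ is the orthogonal projection onto $\mathrm{col}(Q_{\ell,k;B})$, and by Proposition \ref{egenvalueAAT,all} together with the spectral theorem, this column space coincides with the orthogonal complement of $\ker(A_{\ell k;B}A_{\ell k;B}^{\top})$, on which the matrix vanishes). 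Combining the two displayed identities yields $A_{\ell k;B}A_{\ell k;B}^{\top} = Q_{\ell,k;B}\Lambda Q_{\ell,k;B}^{\top}$.

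There is essentially no obstacle: all three ingredients (eigenvalues, orthonormality, completeness) have been established in Propositions \ref{matIdThm}, \ref{p-nrm-x} and \ref{egenvalueAAT,all}, and the statement is a direct packaging of these results into matrix form. The only mild subtlety is the passage from $AA^{\top}Q = Q\Lambda$ to $AA^{\top} = Q\Lambda Q^{\top}$, which is handled by the projection argument just described.
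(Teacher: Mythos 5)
Your proposal is correct and takes essentially the same route as the paper, whose proof of Theorem \ref{eigenDecomp} is simply the observation that the columns of $Q_{\ell,k;B}=\Upsilon_{\ell,k;B}E$ are the normalized vectors $x_{v'}/\|x_{v'}\|$ and that Proposition \ref{egenvalueAAT,all} (eigenvalue equations, pairwise orthogonality, and completeness via the trace argument) then yields the decomposition through the linear-algebra framework of Section \ref{PreLinAlg} (Lemma \ref{positivesemidef}). You merely spell out the routine assembly that the paper leaves implicit, namely $AA^{\top}Q_{\ell,k;B}=Q_{\ell,k;B}\Lambda$ followed by $AA^{\top}Q_{\ell,k;B}Q_{\ell,k;B}^{\top}=AA^{\top}$, and your parenthetical projection argument correctly covers the minor point that (\ref{AtQQT}) applies to the combinatorially defined $Q_{\ell,k;B}$ because its columns span the orthogonal complement of $\ker(AA^{\top})$.
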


\section{Computing the entries of the matrices $W$ and $WA$}\label{w sec}
In this section we give a concrete description of the entries of matrices $W$ and $H$,
using the orthonormal nonzero eigendecomposition of the matrices $AA^{\top}$.

\begin{thm}\label{W-entries}
Let $u \in U_{\ell;B}$, $v \in V_{\ell,k;B}$. Moreover, with notation of Definition \ref{defPQ}, let $P=P(u,v)$ and $Q=Q(u,v)$. Then the entry $W_{\ell,k;B}(u,v)$, the Moore-Penrose pseudo-inverse of $A$, is given as below
\begin{equation}
\label{w-ent-f}
W_{\ell, k;B}(u,v)=\frac{1}{\displaystyle \prod_{i \in \overline{G}_v} b_i}\,\, \sum_{G,\, G_v \subseteq G \subseteq [\ell]}\frac{\displaystyle (-1)^{|Q\setminus G|}\prod_{i \in P\setminus G}(b_i-1)}{S_{\ell-k}(B(G))}
\end{equation}
\end{thm}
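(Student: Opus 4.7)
The plan is to start from the pseudo-inverse formula $W = A^{\top} Q \Lambda^{-1} Q^{\top}$ given by Lemma \ref{W_nzed_A}, and then use the explicit description of $Q$, $\Lambda$ from Definition \ref{Q-E-D-def} and Theorem \ref{eigenDecomp} to rewrite this as a rank-one sum indexed by $v' \in V'_{\ell,\leq k;B}$. Because $Q$ has columns $x_{v'}/\|x_{v'}\|$ and $\Lambda$ has diagonal $S_{\ell-k}(B(G_{v'}))$, the product $A^{\top}Q\Lambda^{-1}Q^{\top}$ will collapse to
\[
W(u,v) \;=\; \sum_{v' \in V'_{\ell,\leq k;B}} \frac{(A^{\top}x_{v'})(u)\,x_{v'}(v)}{S_{\ell-k}(B(G_{v'}))\,\|x_{v'}\|^{2}}.
\]
Using Proposition \ref{matIdThm}(i), the factor $A^{\top}x_{v'}$ equals $S_{\ell-k}(B(G_{v'}))\,z_{v'}$, which cancels the same factor in the denominator, leaving
\[
W(u,v) \;=\; (-1)^{\ell-k}\sum_{v' \in V'_{\ell,\leq k;B}} \frac{\nu_B(u,v')\,\nu_B(v,v')}{\|x_{v'}\|^{2}},
\]
where I have used $z_{v'}(u)=\nu_B(u,v')$ and $x_{v'}(v)=(-1)^{\ell-k}\nu_B(v,v')$.

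Next I would substitute the norm formula of Proposition \ref{p-nrm-x} for $\|x_{v'}\|^2$, which factors into $S_{\ell-k}(B(G_{v'}))$, $\prod_{i\in G_{v'}}b_i$, and $\prod_{i\in\overline{G}_{v'}}(v'_i+v_i'^2)$. Only the last factor depends on the actual non-gap values of $v'$ (not just on $G_{v'}$), so the natural move is to reorganize the outer sum by first fixing $G = G_{v'}$ and then summing over $v'\in\Gamma_B$ with $G_{v'}=G$. This lets me pull out the $G$-dependent denominators and apply Proposition \ref{NuIdentity}(ii) to the inner sum; that proposition evaluates
\[
\sum_{v'\in\Gamma_B,\,G_{v'}=G}\frac{\nu_B(v,v')\,\nu_B(u,v')}{\prod_{i\in\overline{G}}(v'_i+v_i'^2)}
\]
to the closed form $(-1)^{|Q\setminus G|+|G_v|}\prod_{i\in G_v}b_i\prod_{i\in P\setminus G}(b_i-1)/\prod_{i\in\overline{G}}b_i$, and crucially forces the inner sum to vanish unless $G_v \subseteq G$, which is exactly the range of summation in the target formula.

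Finally I would just do the bookkeeping: the remaining sum over $G$ with $G_v\subseteq G\subseteq[\ell]$ has the integrand
\[
\frac{1}{S_{\ell-k}(B(G))\prod_{i\in G}b_i}\cdot\frac{(-1)^{|Q\setminus G|+|G_v|}\prod_{i\in G_v}b_i\prod_{i\in P\setminus G}(b_i-1)}{\prod_{i\in\overline{G}}b_i}.
\]
Since $|G_v|=\ell-k$, the sign $(-1)^{\ell-k}$ out front cancels $(-1)^{|G_v|}$; the product $\prod_{i\in G}b_i\cdot\prod_{i\in\overline{G}}b_i$ equals $\prod_{i=1}^{\ell}b_i$; and dividing by $\prod_{i=1}^{\ell}b_i$ and multiplying by $\prod_{i\in G_v}b_i$ gives the prefactor $1/\prod_{i\in\overline{G}_v}b_i$. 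This yields exactly the claimed formula (\ref{w-ent-f}).

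The only real obstacle is arithmetic rather than structural: keeping track of which set each index ($P$, $Q$, $G_v$, $\overline{G}_v$, $G$, $\overline{G}$) belongs to, and verifying that the cancellations work out cleanly. All the non-trivial combinatorics has already been absorbed into Propositions \ref{matIdThm}, \ref{p-nrm-x}, and \ref{NuIdentity}, so once those are in hand the proof is essentially a substitution and regrouping.
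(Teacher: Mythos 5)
Your proof is correct and takes essentially the same route as the paper's: both start from $W=A^{\top}Q\Lambda^{-1}Q^{\top}$, cancel one factor of $S_{\ell-k}(B(G_{v'}))$ against the eigenvalue--norm denominator (you via the matrix identity of Proposition \ref{matIdThm}(i), the paper via the equivalent entrywise identity (\ref{nuId1})), regroup the sum over $v'$ by $G=G_{v'}$, apply Proposition \ref{NuIdentity}(ii), and finish with the same sign and product bookkeeping. The only cosmetic difference is that the paper routes the computation through the intermediate matrix $C_{\ell,k;B}=\Upsilon_{\ell,k;B}D\Upsilon_{\ell,k;B}^{\top}$ instead of your direct rank-one expansion of $Q\Lambda^{-1}Q^{\top}$.
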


\begin{proof}
{Using Lemma \ref{W_nzed_A}, We have
\begin{align*}
W_{\ell,k;B} &= A_{\ell,k;B}^{\top}Q_{\ell,k;B}\Lambda^{-1}Q_{\ell,k;B}^{\top}\\
&= A_{\ell,k;B}^{\top} \Upsilon_{\ell, k;B} E \Lambda^{-1} E^{\top}
                \Upsilon_{\ell k;B}^{\top},\label{qLam-Qt}
                \end{align*}

Thus setting
\begin{align}
D & := E \Lambda^{-1}E^{\top} = E \Lambda^{-1}E,\\
C_{\ell, k;B} & := \Upsilon_{\ell, k;B} D \Upsilon_{\ell, k;B}^{\top},\label{Cdef}
\end{align}
we obtain
\begin{equation}\label{WAC}
W_{\ell,k;B}=A_{\ell,k;B}^{\top} C_{\ell, k;B}
\end{equation}

By definition of the matrices $D$, $E$ and $\Lambda$, we have $D= {\rm diag}(d_{v'})$, where
 $d_{v'}=\frac{1}{||x_{v'}||^2 \lambda_{v'}}$. Thus by (\ref{p-nrm-x}),

  \begin{align}
         d_{v'}&= \frac{1}{S_{\ell-k}^2(B(G_{v'})) {\displaystyle \prod_{i\in \overline{G}_{v'}} (v'_i+{v'_i}^2)\prod_{i\in G_{v'}}b_i}} \label{d_v'}
         \end{align}

By (\ref{Cdef}) we provide
\begin{equation}\label{Cent}
	C_{\ell, k;B}(y,v)=\sum_{v'\in V'_{\ell ,\leq k}} \nu_B(y,v')\nu_B(v,v')d_{v'}
\end{equation}
 Thus, we have
\begin{align*}
	W_{\ell, k;B}(u,v)&=\sum_{y\in M_{\ell k}(u)} C_{\ell, k;B}(y,v)~~~\hbox{(by  (\ref{WAC}))}\\
&=\sum_{y\in M_{\ell k}(u)}\sum_{v'\in V'_{\ell ,\leq k}} \nu_B(v,v')\, \nu_B(y,v') d_{v'}~~~\hbox{(by  (\ref{Cent}))}\\
&=\sum_{v'\in V'_{\ell ,\leq k}}\nu_B(v,v')\, d_{v'} \sum_{y\in M_{\ell k}(u)}\nu_B(y,v')\\
&=\sum_{v'\in V'_{\ell ,\leq k}}(-1)^{\ell-k}\nu_B(v,v')\, d_{v'}\, S_{\ell-k}(B(G_{v'}))\,\nu_B(u,v')~~~\hbox{(by  (\ref{nuId1}))}\\
&=(-1)^{\ell-k}\sum_{v'\in V'_{\ell ,\leq k}} \frac{\nu_B(v,v')\, \nu_B(u,v')}{S_{\ell-k}(B(G_{v'})) {\displaystyle \prod_{i\in \overline{G}_{v'}} (v'_i+{v'_i}^2)\prod_{i\in G_{v'}}b_i}}~~~\hbox{(by (\ref{d_v'}))}~~~\\
&=(-1)^{\ell-k}\sum_{G, G_v \subseteq G}\,\,\, \sum_{v'\in V'_{\ell ,\leq k},G_{v'}=G} \frac{\nu_B(v,v')\, \nu_B(u,v')}{S_{\ell-k}(B(G_{v'})) {\displaystyle \prod_{i\in \overline{G}_{v'}} (v'_i+{v'_i}^2)\prod_{i\in G_{v'}}b_i}}\\
&=(-1)^{\ell-k}\sum_{G,\, G_v \subseteq G \subseteq [\ell]}\,\frac{1}{S_{\ell-k}(B(G)) {\displaystyle  \prod_{i\in G}b_i} }  \,\, \sum_{v'\in \Gamma_B\, ,G_{v'}=G} \frac{\nu_B(v,v')\,\nu_B(u,v')}{ {\displaystyle \prod_{i\in \overline{G}_{v'}} (v'_i+{v'_i}^2)}}\\
&=(-1)^{\ell-k}\sum_{G,\, G_v \subseteq G \subseteq [\ell]}\,\frac{1}{S_{\ell-k}(B(G)) {\displaystyle \prod_{i\in G}b_i}}. \frac{\displaystyle (-1)^{\ell-k+|Q\setminus G|} \prod_{i \in G_v} b_i \prod_{i\in P\setminus G}(b_i-1)}{\displaystyle \prod_{i \in \overline{G}}b_i}  ~~~\hbox{(by (\ref{fracNuId}))}~~~\\
&=\frac{1}{\displaystyle \prod_{i \in \overline{G}_v} b_i}\,\, \sum_{G,\, G_v \subseteq G \subseteq [\ell]}\frac{\displaystyle (-1)^{|Q\setminus G|}\prod_{i \in P\setminus G}(b_i-1)}{S_{\ell-k}(B(G))}
\end{align*}
}
\end{proof}

\begin{thm}\label{G-entries} The matrix $H_{\ell,k;B}:=W_{\ell,k;B}A_{\ell,k;B}$ is a symmetric idempotent matrix, the sum of entries of any of whose rows (columns) equals $1$. Furthermore, for any $u,w \in U_{\ell;B}$, the entry $H_{\ell,k;B}(u,w)$ is given as below, where by using Definition \ref{defPQ}, $P=P(u,w)$ and $Q=Q(u,w)$.
\begin{equation}
\label{g-ent-f}
H_{\ell, k;B}(u,w)=\frac{1}{\displaystyle \prod_{i=1}^{\ell} b_i}\,\, \sum_{G \subseteq [\ell],\,\,\ell-k\leq |G|}{\displaystyle (-1)^{|Q\setminus G|}\prod_{i \in P\setminus G}(b_i-1)}
\end{equation}

\end{thm}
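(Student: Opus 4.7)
The plan splits naturally into two pieces: the structural properties of $H$, and the explicit entry formula. For symmetry and idempotence I would argue directly from $H = WA = A^{+}A$. Symmetry is one of the defining identities of the Moore--Penrose pseudoinverse (used in the proof of Lemma~\ref{W_nzed_A}). Idempotence follows from $H^{2} = W(AWA) = WA = H$, again using $AWA = A$. For the row/column sum claim, I would invoke the fact that $A^{+}A$ is the orthogonal projection onto $\operatorname{row}(A)$; concretely, $H A^{\top} = (WA)A^{\top} = A^{\top}(WA)^{\top} = (AWA)^{\top} = A^{\top}$, so $H$ fixes every vector of the form $A^{\top}z$. Now for every $u \in U_{\ell;B}$ the number of $v \in V_{\ell,k;B}$ matching $u$ equals $\binom{\ell}{k}$ (one picks which $\ell-k$ positions are gapped), giving $A^{\top}\mathbf{1}_{V_{\ell,k;B}} = \binom{\ell}{k}\,\mathbf{1}_{U_{\ell;B}}$. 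Hence $\mathbf{1} \in \operatorname{row}(A)$, so $H\mathbf{1} = \mathbf{1}$, and by symmetry $\mathbf{1}^{\top}H = \mathbf{1}^{\top}$.

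For the entry formula I would compute $H(u,w) = \sum_{v}W(u,v)A(v,w)$, where the sum collapses to $v \in M_{\ell,k}(w)$ since $A(v,w)=1$ iff $w$ matches $v$. Because $w$ is gap-free, such $v$ are parametrized by their gap set $F := G_{v} \subseteq [\ell]$ with $|F| = \ell-k$, via the rule $v_{i} = w_{i}$ for $i \notin F$. A direct check gives $P(u,v) = P(u,w) \setminus F$ and $Q(u,v) = Q(u,w) \setminus F$. Substituting the formula of Theorem~\ref{W-entries} and writing $P := P(u,w)$, $Q := Q(u,w)$, the condition $F \subseteq G$ lets me replace $(P \setminus F) \setminus G$ by $P \setminus G$ and similarly for $Q$, yielding
\begin{equation*}
H(u,w) \;=\; \sum_{|F|=\ell-k}\frac{1}{\prod_{i \notin F} b_{i}}\sum_{F \subseteq G \subseteq [\ell]}\frac{(-1)^{|Q \setminus G|}\prod_{i \in P \setminus G}(b_{i}-1)}{S_{\ell-k}(B(G))}.
\end{equation*}

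Swapping the order of summation, for each $G$ with $|G| \geq \ell-k$ the inner sum over admissible $F$ collapses via
\begin{equation*}
\sum_{F \subseteq G,\, |F| = \ell-k}\frac{1}{\prod_{i \notin F} b_{i}} \;=\; \frac{1}{\prod_{i=1}^{\ell} b_{i}}\sum_{F \subseteq G,\, |F| = \ell-k}\prod_{i \in F}b_{i} \;=\; \frac{S_{\ell-k}(B(G))}{\prod_{i=1}^{\ell} b_{i}},
\end{equation*}
which exactly cancels the $S_{\ell-k}(B(G))$ in the denominator and delivers the claimed formula, the range $|G|\geq \ell-k$ arising naturally (otherwise no admissible $F$ exists). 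The main obstacle is disciplined bookkeeping of the set differences once one exploits $F \subseteq G$; no new combinatorial identity beyond the elementary symmetric sum above (and those already in Section~\ref{sym sec}) is required.
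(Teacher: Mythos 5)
Your proposal is correct, and the core of it --- the derivation of formula (\ref{g-ent-f}) --- is essentially the paper's own argument: substitute the entry formula of Theorem \ref{W-entries} into $H(u,w)=\sum_{v\sim w}W(u,v)$, use $F=G_v\subseteq G$ to replace $P(u,v)\setminus G$ and $Q(u,v)\setminus G$ by $P(u,w)\setminus G$ and $Q(u,w)\setminus G$, interchange the two summations, and collapse the inner sum over gap sets $F\subseteq G$ with $|F|=\ell-k$ to $S_{\ell-k}(B(G))\big/\prod_{i=1}^{\ell}b_i$, cancelling the denominator; this is exactly the paper's computation (there phrased as the inner sum $\sum_{v\sim w,\,G_v\subseteq G}\prod_{i\in G_v}b_i=S_{\ell-k}(B(G))$). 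Where you genuinely diverge is the row/column-sum claim. The paper proves $A^{\top}Q\Lambda^{-1}Q^{\top}Aj=j$ by pushing $j$ through the explicit eigendecomposition: it identifies $j=z^{\ell k 0}$, the eigenvector attached to $v'=g^{\ell}$, uses $Aj=x^{\ell k 0}$ (Proposition \ref{matIdThm}), the orthogonality of the columns of $Q$ to get $Q^{\top}x^{\ell k 0}=\|x^{\ell k 0}\|e_1$, and the eigenvalue $\lambda_1=S_{\ell-k}(B)$. You instead derive $HA^{\top}=A^{\top}$ purely from the Penrose identities (symmetry of $WA$ together with $AWA=A$, both established in Lemma \ref{W_nzed_A}) and combine this with the elementary count $A^{\top}\mathbf{1}=\binom{\ell}{k}\mathbf{1}$, i.e.\ $|M_{\ell,k}(u)|=\binom{\ell}{k}$. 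Your route is more elementary and more robust: it uses nothing about the spectral structure beyond the pseudoinverse property itself, so it would survive any other construction of $W$, whereas the paper's version reuses the eigenvector apparatus it has already built (in particular that $j$ lies in the top eigenspace) at the cost of a longer computation. Your handling of symmetry ($(WA)^{\top}=WA$ is a defining Penrose condition) and idempotence ($H^{2}=W(AWA)=WA$) is precisely what the paper leaves implicit in Lemma \ref{W_nzed_A}, so nothing is missing there.
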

\begin{proof}
{It is easily seen that $H$ is a symmetric idempotent matrix (This is in fact implicit in the proof of Lemma \ref{W_nzed_A} . Before proving that the sum of entries of each row (columns) of $H$ equals $1$, we need some notations. Let $j=[1,1,\ldots, 1]^{\top}$ and $e_1=[1,0,0,\ldots,0]^{\top}$ be vectors of appropriate sizes. We let $x^{\ell k 0}= x_{g g \ldots g}^{\ell k 0}$ and $z^{\ell k 0}=z_{gg\ldots g}^{\ell k 0}$. As define earlier the columns of $Q$ are vectors $\frac{x_{v'}^{\ell k n}}{\|x_{v'}^{\ell k n}\|}$,
without loss of generality suppose that the vector $\frac{x^{\ell k 0}}{\|x^{\ell k 0}\|}$ be the first column of $Q$ and we denote the corresponding eigenvalue ($S_{\ell-k}(B)$) by $\lambda_1$.

We want to prove that
\begin{equation}\label{hj=j}
A^{\top} Q \Lambda^{-1} Q^{\top}A j=j
\end{equation}
first note that $z^{\ell k 0}=j$, hence $Aj= Az^{\ell k 0}=x^{\ell k 0}$ (by Proposition \ref{matIdThm}, part (i)).
Since, $\frac{x^{\ell k 0}}{\|x^{\ell k 0}\|}$ is the first row of $Q^{\top}$ and since, the rows of $Q^{\top}$ are pairwise orthogonal, we have $Q^{\top} x^{\ell k 0}=\|x^{\ell k 0}\|e_1$. So, we have
\begin{align*}
A^{\top} Q \Lambda^{-1} Q^{\top}A j &= \|x^{\ell k 0}\| A^{\top} Q \Lambda^{-1} e_1\\
&= \frac{\|x^{\ell k 0}\|}{\lambda_1} A^{\top} Q e_1 \,\,\,\,
\\
&=  \frac{\|x^{\ell k 0}\|}{\lambda_1} A^{\top} \frac{x^{\ell k 0}}{\|x^{\ell k 0}\|}\\
&= \frac{1}{\lambda_1} A^{\top} x^{\ell k 0}\\
&= z^{\ell k 0} {\rm\,\,\,(by\,\, Proposition\, \ref{matIdThm}\, (i))}\\
&= j
\end{align*}

Firstly note that if $v\sim w$ then for any subset $G_v\subseteq G \subseteq [\ell]$ we have
\begin{align*}
P(u,v)\setminus G&=P(u,w)\setminus G\\
Q(u,v)\setminus G&=Q(u,w)\setminus G
\end{align*}
Secondly, by $H_{\ell,k;B}=W_{\ell,k;B}A_{\ell,k;B}$ we have
\begin{equation*}
H(u,w)_{\ell,k;B}=\sum_{v\in V_{\ell,k;B},\,\,v\sim w} W_{\ell,k;B}(u,v)
\end{equation*}

Thirdly, the result of Theorem \ref{W-entries}, is rewritten as
\begin{equation*}
W_{\ell,k:B}(u,v)=\frac{1}{\displaystyle \prod_{i=1}^{\ell} b_i}\,\, \sum_{G,\, G_v \subseteq G \subseteq [\ell]}\frac{\displaystyle (-1)^{|Q(u,v)\setminus G|}{\displaystyle \prod_{i \in G_v} b_i}\prod_{i \in P(u,v)\setminus G}(b_i-1)}{S_{\ell-k}(B(G))}
\end{equation*}
Thus by the two last formulas we obtain
\begin{align*}
H_{\ell,k;B}(u,w)&=\frac{1}{\displaystyle \prod_{i=1}^{\ell}b_i}\sum_{v\in V_{\ell,k;B},\,\,v\sim w}\,\,\,\, \sum_{G,\, G_v \subseteq G \subseteq [\ell]}\frac{\displaystyle (-1)^{|Q(u,v)\setminus G|}{\displaystyle \prod_{i \in G_v} b_i}\prod_{i \in P(u,v)\setminus G}(b_i-1)}{S_{\ell-k}(B(G))}
\\
&={\displaystyle \frac{1}{\prod_{i=1}^{\ell}b_i}}\,\sum_{G\subseteq [\ell],\,\ell-k\leq |G|}\,\,\,\, \sum_{v\in V_{\ell,k;B},\,v\sim w,G_v\subseteq G}\frac{\displaystyle (-1)^{|Q(u,v)\setminus G|}{\displaystyle \prod_{i \in G_v} b_i}\prod_{i \in P(u,v)\setminus G}(b_i-1)}{S_{\ell-k}(B(G))}\\
&={\displaystyle \frac{1}{\prod_{i=1}^{\ell}b_i}}\,\sum_{G\subseteq [\ell],\,\ell-k\leq |G|}\,\,\,\, \sum_{v\in V_{\ell,k;B},\,v\sim w,G_v\subseteq G}\frac{\displaystyle (-1)^{|Q(u,w)\setminus G|}{\displaystyle \prod_{i \in G_v} b_i}\prod_{i \in P(u,w)\setminus G}(b_i-1)}{S_{\ell-k}(B(G))}\\
&={\displaystyle \frac{1}{\prod_{i=1}^{\ell}b_i}}\,\sum_{G\subseteq [\ell],\,\ell-k\leq |G|}\frac{\displaystyle (-1)^{|Q(u,w)\setminus G|}\prod_{i \in P(u,w)\setminus G}(b_i-1)}{S_{\ell-k}(B(G))} \sum_{v\in V_{\ell,k;B},\,v\sim w,G_v\subseteq G}{\displaystyle \,\,\,\, \prod_{i \in G_v} b_i}\\
\end{align*}
Now, considering the fact that the inner summation is exactly $S_{\ell-k}(B(G))$, we obtain (\ref{g-ent-f}).
}
\end{proof}

\begin{example} Let $B=(b_1,b_2)=(3,2)$, $\ell=2, k=1$. Then we have \\
$$A_{2,1;B}=\bordermatrix{&{\texttt{\color{blue}00}}&{\texttt{\color{blue}01}} &{\texttt{\color{blue}10}} &{\texttt{\color{blue}11}} &{\texttt{\color{blue}20}} &{\texttt{\color{blue}21}} \cr
                           {\texttt{\color{blue}0g}} &1 &1 &0 &0 &0 &0 \cr
                           {\texttt{\color{blue}1g}} &0 &0 &1 &1 &0 &0\cr
                           {\texttt{\color{blue}2g}} &0 &0 &0 &0 &1 &1\cr
                           {\texttt{\color{blue}g0}} &1 &0 &1 &0 &1 &0\cr
                           {\texttt{\color{blue}g1}} &0 &1 &0 &1 &0 &1}$$

{\rm Case $n=0$}
$$                           x_{gg}=\bordermatrix{&{\texttt{\color{blue}gg}} \cr
                           {\texttt{\color{blue}0g}} &2 \cr
                           {\texttt{\color{blue}1g}} &2 \cr
                           {\texttt{\color{blue}2g}} &2 \cr
                           {\texttt{\color{blue}g0}} &3 \cr
                           {\texttt{\color{blue}g1}} &3 \cr} \qquad \quad
                             z_{gg}=\bordermatrix{&{\texttt{\color{blue}gg}} \cr
                           {\texttt{\color{blue}00}} &1 \cr
                           {\texttt{\color{blue}01}} &1 \cr
                           {\texttt{\color{blue}10}} &1 \cr
                           {\texttt{\color{blue}11}} &1 \cr
                           {\texttt{\color{blue}20}} &1 \cr
                           {\texttt{\color{blue}21}} &1 \cr} \qquad \quad
$$

$$                          AA^{\top}x_{gg}=5x_{gg} , \,\,\,\,\,\,
A^{\top}A z_{gg}=5 z_{gg}. $$

{\rm Case $n=1$:} in this case $V'=\{1g, 2g, g1\}$ and
$$                           x_{1g}=\bordermatrix{&{\texttt{\color{blue}1g}} \cr
                           {\texttt{\color{blue}0g}} &2 \cr
                           {\texttt{\color{blue}1g}} &-2 \cr
                           {\texttt{\color{blue}2g}} &0 \cr
                           {\texttt{\color{blue}g0}} &0 \cr
                           {\texttt{\color{blue}g1}} &0 \cr} \qquad \quad
                            x_{2g}=\bordermatrix{&{\texttt{\color{blue}2g}} \cr
                           {\texttt{\color{blue}0g}} &2 \cr
                           {\texttt{\color{blue}1g}} &2 \cr
                           {\texttt{\color{blue}2g}} &-4 \cr
                           {\texttt{\color{blue}g0}} &0 \cr
                           {\texttt{\color{blue}g1}} &0 \cr} \qquad \quad
                            x_{g1}=\bordermatrix{&{\texttt{\color{blue}g1}} \cr
                           {\texttt{\color{blue}0g}} &0 \cr
                           {\texttt{\color{blue}1g}} &0 \cr
                           {\texttt{\color{blue}2g}} &0 \cr
                           {\texttt{\color{blue}g0}} &3 \cr
                           {\texttt{\color{blue}g1}} &-3 \cr} \qquad \quad.
$$

$$AA^{\top}x_{1g}=2x_{1g} ,\,\,\,
AA^{\top}x_{2g}=2x_{2g} ,\,\,\,
AA^{\top}x_{g1}=3x_{g1}. $$

$$                          z_{1g}=\bordermatrix{&{\texttt{\color{blue}1g}} \cr
                           {\texttt{\color{blue}00}} &1 \cr
                           {\texttt{\color{blue}01}} &1 \cr
                           {\texttt{\color{blue}10}} &-1 \cr
                           {\texttt{\color{blue}11}} &-1 \cr
                           {\texttt{\color{blue}20}} &0 \cr
                           {\texttt{\color{blue}21}} &0 \cr} \qquad \quad
                            z_{2g}=\bordermatrix{&{\texttt{\color{blue}2g}} \cr
                           {\texttt{\color{blue}00}} &1 \cr
                           {\texttt{\color{blue}01}} &1 \cr
                           {\texttt{\color{blue}10}} &1 \cr
                           {\texttt{\color{blue}11}} &1 \cr
                           {\texttt{\color{blue}20}} &-2 \cr
                           {\texttt{\color{blue}21}} &-2 \cr} \qquad \quad
                            z_{g1}=\bordermatrix{&{\texttt{\color{blue}g1}} \cr
                           {\texttt{\color{blue}00}} &1 \cr
                           {\texttt{\color{blue}01}} &-1 \cr
                           {\texttt{\color{blue}10}} &1 \cr
                           {\texttt{\color{blue}11}} &-1 \cr
                           {\texttt{\color{blue}20}} &1 \cr
                           {\texttt{\color{blue}21}} &-1 \cr} \qquad \quad$$

$$A^{\top}Az_{1g}=2z_{1g} ,\,\,\,
A^{\top}Az_{2g}=2z_{2g} ,\,\,\,
A^{\top}Az_{g1}=3z_{g1}. $$

Matrices $\Upsilon, \Lambda, E, W$ and $H$ are as follows
$$\Upsilon_{2,1;B}=\bordermatrix{&{\texttt{\color{blue}gg}}&{\texttt{\color{blue}1g}} &{\texttt{\color{blue}2g}} &{\texttt{\color{blue}g1}} \cr
                           {\texttt{\color{blue}0g}} &2 &2 &2 &0 \cr
                           {\texttt{\color{blue}1g}} &2 &-2 &2 &0 \cr
                           {\texttt{\color{blue}2g}} &2 &0 &-4 &0 \cr
                           {\texttt{\color{blue}g0}} &3 &0 &0 &3 \cr
                           {\texttt{\color{blue}g1}} &3 &0 &0 &-3 },$$

$\Lambda=diag(5,2,2,3)$ and $E= diag(\frac{1}{\sqrt{30}},\frac{1}{\sqrt{8}},\frac{1}{\sqrt{24}},\frac{1}{\sqrt{18}})$.

$$Q_{2,1;B}=\bordermatrix{&{\texttt{\color{blue}gg}}&{\texttt{\color{blue}1g}} &{\texttt{\color{blue}2g}} &{\texttt{\color{blue}g1}} \cr
                           {\texttt{\color{blue}0g}} &\frac{2}{\sqrt{30}} &\frac{2}{\sqrt{8}} &\frac{2}{\sqrt{24}} &0 \cr
                           {\texttt{\color{blue}1g}} &\frac{2}{\sqrt{30}} &\frac{-2}{\sqrt{8}} &\frac{2}{\sqrt{24}} &0 \cr
                           {\texttt{\color{blue}2g}} &\frac{2}{\sqrt{30}} &0 &\frac{-4}{\sqrt{24}} &0 \cr
                           {\texttt{\color{blue}g0}} &\frac{3}{\sqrt{30}} &0 &0 &\frac{3}{\sqrt{18}} \cr
                           {\texttt{\color{blue}g1}} &\frac{3}{\sqrt{30}} &0 &0 &\frac{-3}{\sqrt{18}} }$$

$$W_{2,1;B}=\frac{1}{30}\bordermatrix{&{\texttt{\color{blue}0g}}&{\texttt{\color{blue}1g}} &{\texttt{\color{blue}2g}} &{\texttt{\color{blue}g0}} &{\texttt{\color{blue}g1}}\cr
                           {\texttt{\color{blue}00}} &12 &-3 &-3 &8  &-2 \cr
                           {\texttt{\color{blue}01}} &12 &-3 &-3 &-2 &8  \cr
                           {\texttt{\color{blue}10}} &-3 &12 &-3 &8  &-2 \cr
                           {\texttt{\color{blue}11}} &-3 &12 &-3 &-2 &8  \cr
                           {\texttt{\color{blue}20}} &-3 &-3 &12 &8  &-2 \cr
                           {\texttt{\color{blue}21}} &-3 &-3 &12 &-2 &8 },\,\,\,\,
                           H_{2,1;B}=\frac{1}{6}\bordermatrix{&{\texttt{\color{blue}00}}&{\texttt{\color{blue}01}} &{\texttt{\color{blue}10}} &{\texttt{\color{blue}11}} &{\texttt{\color{blue}20}} &{\texttt{\color{blue}21}}\cr
                           {\texttt{\color{blue}00}} &4  &2  &1  &-1 &1  &-1 \cr
                           {\texttt{\color{blue}01}} &2  &4  &-1 &1  &-1 &1  \cr
                           {\texttt{\color{blue}10}} &1  &-1 &4  &2  &1  &-1 \cr
                           {\texttt{\color{blue}11}} &-1 &1  &2  &4  &-1 &1  \cr
                           {\texttt{\color{blue}20}} &1  &-1 &1  &-1 &4  &2  \cr
                           {\texttt{\color{blue}21}} &-1 &1  &-1 &1  &2  &4 }$$

\end{example}

{\bf Acknowledgement.}


\begin{thebibliography}{99}

\bibitem{CameronNotes}  P. J. Cameron, (2003) Notes on Counting. http://www.maths.qmul.ac.uk/~pjc/notes\\ /counting.pdf. Accessed 25 January 2012.

\bibitem{cox} D. A Cox, J. Little, D. O'Shea, {\rm Ideals, varieties, and algorithms: an introduction to computational algebraic geomerty
and commutative algebra, (3rd edition)}, Springer, 2007.

\bibitem{Delsarte} P. Delsarte, {\it Beyond the orthogonal array concept},
European J. Combin. (2004). {\bf 25}, 187-198

\bibitem{Delsarte association} P. Delsarte, {\it Association schemes and $t$-designs in regular semilattices},
J. Combin. Theory Ser. A 20 (1976)

\bibitem{handbooklin} L. Han and M. Neumann, {\it Inner Product Spaces, Orthogonal Projection, Least Squares
and Singular Value Decomposition} In: Leslie Hogben (eds) {\it Handbook of Linear Algebra, 2nd Edition (Discrete Mathematics and Its Applications)} Boca Raton, FL: CRC Press; (2013).

\bibitem{Enhanced kmer-b}
M. Ghandi, D. Lee, M. Mohammad-Noori, and M. A. Beer,
{\it Enhanced regulatory sequence prediction using gapped k-mer features},
PLoS Comput Biol, 2014. 10(7): p. e1003711

\bibitem{kmer-b}
M. Ghandi, M. Mohammad-Noori, and M. A. Beer,
 {\it Robust $k$-mer frequency estimation using gapped $k$-mers},
 Journal of mathematical biology. {\bf 60}, Issue 2 (2014), 469-500.


\bibitem{knuth} R. L. Graham, D. E. Knuth, and O. Patashnik, {\rm Concrete Mathematics: A Foundation for Computer Science (Second Edition)}, Addison Wesley Publishing Company, 1994.	
	
\bibitem{linalg} M. Marcus and H. Minc, Introduction to Linear Algebra. New York: Dover, p. 182, 1988.

\bibitem{linalg PDM} M. Marcus and H. Minc, "Positive Definite Matrices." §4.12 in A Survey of Matrix Theory and Matrix Inequalities. New York: Dover, p. 69, 1992.

\bibitem{Terwilliger} P. Terwilliger, {\it The incidence algebra of a uniform poset} In:
Codeng Theory and Design Theory Part I: Coding Theory, IMA Volumes in Mathematics and its Applications, vol. (20),
Springer, New York, 1990, 193-212.

\bibitem{website} Website

\end{thebibliography}
\end{document}